\newtheorem{theorem}{Theorem}[section]
\newtheorem{rem}[theorem]{Remark}
\newtheorem{notation}[theorem]{Notation}
\newenvironment{remark}{\begin{rem} \em }{\em \end{rem}}
\newtheorem{proposition}[theorem]{Proposition}
\newtheorem{ex}[theorem]{Example}
\newenvironment{example}{\begin{ex} \em }{\em \end{ex}}
\newtheorem{definition}[theorem]{Definition}
\newtheorem{assumption}[theorem]{Assumption}
\newcommand{\norm}  [1]{\ensuremath{\left  \|       #1  \right \|       }}
\newcommand{\abs  } [1]{\ensuremath{\left  |       #1  \right |        }}
\newcommand{\absgg} [1]{\ensuremath{\biggl |       #1  \biggr |        }}
\newcommand{\eps}{\ensuremath{\varepsilon}}
\newcommand{\st} {\ensuremath{|\;}}
\definecolor{orange}{RGB}{250, 54, 0}
\definecolor{purple}{rgb}{0.75, 0.0, 1.0}
\newcommand{\SHP} {{\rm SHP \,}}
\newcommand{\SubHP} {{\rm SubHP \,}}
\newcommand{\cl}  {{\rm cl  \,}}
\newcommand{\bd}  {{\rm bd \,}}
\newcommand{\Int} {{\rm int \,}}
\newcommand{\conv}  {{\rm conv \,}}
\newcommand{\dom}{{\rm dom\,}}
\newcommand{\E}{\mathcal{E}}
\newcommand{\R}{\mathbb{R}}
\renewcommand{\E}{\mathbb E}
\renewcommand{\P}{\mathbb P}
\newcommand{\Max}{{\rm Max}\,}
\newcommand{\Maxk}{{\rm Max}_K\,}
\newcommand{\wMax}{{\rm wMax}\,}
\newcommand{\nwMax}{{\rm (w)Max}\,}
\newcommand{\wMaxk}{{\rm wMax}_K\,}
\newcommand{\Mink}{{\rm Min}_K\,}
\newcommand{\wMin}{{\rm wMin}\,}
\newcommand{\nwMin}{{\rm (w)Min}\,}
\newcommand{\wMink}{{\rm wMin}_K\,}
\newcommand{\nwMink}{{\rm (w)Min}_K\,}
\newcommand{\C} {{\rm C \,}}
\newcommand{\Cupp} {{\rm C^{up} \,}}
\newcommand{\Clow} {{\rm C^{lo} \,}}
\newcommand{\Cs} {{\rm C^{s} \,}}
\newcommand{\Cw} {{\rm C^{w} \,}}
\newcommand{\cs} {{\rm c^{s} \,}}
\newcommand{\cw} {{\rm c^{w} \,}}
\author{Birgit Rudloff \thanks{Vienna University of Economics and Business, Institute for Statistics and Mathematics, Vienna 1020, Austria, birgit.rudloff@wu.ac.at.} 
\and Firdevs Ulus \thanks{Bilkent University, Department of Industrial Engineering, Ankara, 06800 Turkey, firdevs@bilkent.edu.tr}}
\title{Certainty Equivalent and Utility Indifference Pricing for Incomplete Preferences via Convex Vector Optimization }
\date{\today}
\begin{document}
\maketitle

\begin{abstract} 
For incomplete preference relations that are represented by multiple priors and/or multiple -possibly multivariate- utility functions, we define a certainty equivalent as well as the utility 
indifference price bounds as set-valued functions of the claim. Furthermore, we motivate and introduce the notion of a weak and a strong certainty equivalent. We will show that our definitions contain as special cases some definitions found in the literature so far on complete or special incomplete preferences. We prove monotonicity and convexity properties of utility buy and sell prices that hold in total analogy to the properties of the scalar indifference prices for complete preferences. We show how the (weak and strong) set-valued certainty equivalent as well as the indifference price bounds can be computed or approximated by solving convex vector optimization problems. Numerical examples and their economic interpretations are given for the univariate as well as for the multivariate case.
\medskip

\noindent
{\bf Keywords:} utility maximization, indifference price bounds, certainty equivalent, incomplete preferences, convex vector optimization.

\medskip

\noindent
{\bf JEL Classification:} D81, C61, G13

\end{abstract}

\section{Introduction}
\label{sect:Intro}

The certainty equivalent of a random payoff is a guaranteed return that a decision maker would accept now as it is equally desirable as the uncertain return that will be received in the future. Indifference pricing can be seen as a similar concept adapted to a dynamic setting. It plays an important role in pricing in incomplete markets as it typically yields a more narrow pricing interval compared to the often very wide no-arbitrage pricing interval, see for instance~\cite{henderson}.

The certainty equivalent and utility indifference pricing are well studied for complete preference relations that can be represented by a single univariate utility function and there are also some extensions for complete preferences represented by a single multivariate utility function. However, the completeness assumption of the preference relation is restrictive as it ignores the typical `indecisiveness' that individuals face. This concern was stated already by von Neumann and Morgenstern in their 1947 paper~\cite{vonNeuman1947} as ``It is conceivable -and may even in a way be more realistic- to allow for cases where the individual is neither able to state which of two alternatives he prefers nor that they are equally desirable." As Aumann~\cite{Aumann1962} and many researchers agreed afterwards, it is natural and indeed more realistic to exclude the completeness axiom when considering preference relations. 

We introduce a certainty equivalent and indifference buy (and sell) price concepts for underlying preferences that are not necessarily complete. In particular, let us consider a probability space $(\Omega,\mathcal{F},\P)$, the set of all $\mathcal{F}$-measurable $\R^d$-valued random vectors $L^0(\mathcal{F},\R^d)$ and a preorder $\succsim$ on $L^0(\mathcal{F},\R^d)$.

Note that if $d=1$ and the preference relation is complete, then the certainty equivalent of a random amount $Z\in L^0(\mathcal{F},\R)$ can be described as the deterministic amount, denoted by $C(Z)$, satisfying $Z \sim C(Z)$.  Under standard monotonicity and continuity assumptions, $C(Z)$ exists and it is unique, hence well-defined. When $d>1$, the deterministic amount indifferent to $Z\in L^0(\mathcal{F},\R^d)$ may not be unique. A natural way to deal with this problem is to consider the set of all such certain amounts. In other words, {one can} define the certainty equivalent as $$C(Z) := \{c\in\R^d \st c \sim Z\}$$ for complete as well as for incomplete preference relations. 
However, it is restrictive in the sense that whenever the preferences are incomplete, the certainty equivalent may be an empty set, {and thus}, it may fail to capture all the information that it captures in the complete preference case. Therefore, we {propose to} consider {also} the set of certain amounts $c$ for which the decision maker prefers $c$ to $Z$; and symmetrically, the set of certain amounts $c$ for which the decision maker prefers $Z$ to $c$, that is, we consider the sets $\{c\in\R^d \st c \succsim Z\}$ and $\{c\in\R^d \st c\precsim Z\}$. Clearly, the certainty equivalent is the intersection of these two sets, {but} whenever it is empty, the two sets above would {still} provide the full information to the decision maker.

Mimicking the definition for the certainty equivalent, the indifference buy (sell) price of a claim $C$ {could} be defined as the set of prices $p$ such that the decision maker is indifferent between buying (selling) the claim at price $p$ and not buying (selling) it at all. Since such price may not exist when the preference relation is incomplete, we instead consider the set of all prices for which the decision maker has a preference of buying/selling the claim over taking no action, namely, the \emph{set valued buy (sell) prices}.

In order to analyze these set-valued concepts in detail and in order to be able to compute these for practical reasons, we assume for the rest of the paper that the underlying probability space is finite and the incomplete preference relation accepts a representation. Note that the incompleteness of preferences of a decision maker may stem from different reasons. First, certain outcomes might be incomparable for the decision maker. A simple example is the case where the decision maker is a committee instead of an individual. Ok~\cite{ok2000}, and Dubra, Maccheroni, and Ok~\cite{ok2004} suggested vector-valued utility representations in order to deal with such preferences. Secondly, even though the decision maker has a complete preference over the set of all outcomes, the incompleteness may occur because of the decision maker's indecisiveness on the likelihood of the states of the world. This is known as Bewley's model of Knightian uncertainty~\cite{bewley}.  

In 2006, Nau \cite{Nau} considered preferences which are allowed to be incomplete in both senses and studied the representation of them. Indeed, allowing both types of incompleteness leads to a representation of preferences by a set of probability measures paired with utility functions. The representations of incomplete preferences are further studied for instance by Ok, Ortoleva and Riella \cite{ok2012} and Galaabaatar and Karni \cite{galaabaatar2013}. In the former, \textit{single-prior expected multi-utility representation} and \textit{multi-prior expected single-utility representation}; whereas in the latter \textit{multi-prior expected multi-utility representation} of incomplete preferences are axiomatized. In both papers the state space is assumed to be finite. In~\cite{galaabaatar2013}, the outcome (prize) space is also assumed to be finite, whereas in~\cite{ok2012} it is a compact metric space.

In this paper, the state space is assumed to be finite and the outcome space is $\R^d$ for $d \geq 1$. We consider preferences on $\R^d$-valued random vectors where the utility functions are multivariate for $d>1$ and where the preference relations are represented by a set of probability measures and a set of utility functions as in \cite{galaabaatar2013}. As stated in \cite{Nau}, this representation ``preserves the traditional separation of information about beliefs from information about values", which ``arises naturally when imprecise probabilities and utilities are assessed independently, as they often are in practice". Note however that it is also possible to consider a set of probability-utility pairs as in \cite{Nau} without changing the main results of this paper. 

As stated before, the certainty equivalent set can be empty and in order to capture the full information one can consider the set of better/worse values instead of considering the indifferent ones. Indeed, in the special case $d=1$ and an incomplete preference relation admitting a single-prior expected multi-utility representation, Armbruster and Delage~\cite{CEworst} defined a `strong certainty equivalent'. In a symmetrical way, it is also possible to consider a `weak certainty equivalent'.
A direct extension of this definition to the case $d>1$ is not straightforward, but the construction of the set of all better/worse values described above allows us to define a set-valued strong certainty equivalent as well as a set-valued weak certainty equivalent also in this case. This definition reduces indeed to the usual definition whenever $d=1$ and the preference relation is complete, as well as to the definition of~\cite{CEworst} when $d=1$ and the  incomplete preference relation admits a single-prior expected multi-utility representation. Properties, interpretations and examples will be given for the case $d>1$ as well as for $d=1$.

In the literature, there are different certainty equivalent concepts for $d>1$ when a complete preference relation admitting a single-prior single-utility representation with a multivariate utility function is considered, see the survey~\cite{pagani}. In~\cite{pagani}, {it is stated that} no vector-valued or set-valued certainty equivalent concept has been introduced for multivariate utility functions so far. However, a set-valued certainty equivalent definition is provided in~\cite{juniorproj} for a {multi-asset game setting. In particular, a set-valued utility function, which depends on the exchange structure of the multi-asset model and a vector valued utility function, as well as a set valued certainty equivalent for this particular setting are introduced in~\cite{juniorproj}. A parametric representation of the certainty equivalent of a particular game, where a component-wise vector valued utility function is used to define the set valued utility, is computed analytically. Here, we provide a set-valued definition of a certainty equivalent for a much more general setting.} 

In addition to the certainty equivalent, we study utility indifference price bounds under an incomplete preference that admits a multi-prior expected multi-utility representation where utility functions are allowed to be multivariate. This is done by first considering the set-valued buy and sell prices and then defining the utility indifference price bound as the boundaries of these sets. We show that these definitions of buy and sell price bounds {have intuitive interpretations and they} recover the complete preference case when the utility function is univariate. Moreover, we will prove that the set-valued buy and sell prices satisfy some monotonicity and convexity properties in total analogy to the properties of the scalar indifference prices for complete preferences.

Utility indifference buy and sell prices for a complete preference relation represented by a multivariate utility function under proportional transacation costs have been studied by Benedetti and Campi in~\cite{benedetti2012}. Accordingly, the buy and sell prices, $p^b_j, p^s_j$ are defined in terms of a single currency $j \in \{1,\ldots,d\}$. It has been shown in~\cite{benedetti2012} that $p^b_j, p^s_j$ are well defined, they exists uniquely under the conical market model. We show that the set-valued prices contain the scalar prices defined in~\cite{benedetti2012}. In particular, $p^b_je_j$ and $p^s_je_j$ for all $j \in \{1,\ldots,d\}$, where $e_j$ is the unit vector with $j^{th}$ component being 1, are on the boundary of the set-valued buy and sell prices, respectively. They correspond to the indifference prices, if one has initial capital in one of the $d$ currencies only. In contrast, the set-valued indifference price bounds defined here also allow for an initial portfolio in the $d$ currencies and allow also for incomplete preference relations.

Recently, Hamel and Wang~\cite{Umax_setopt} have considered the utility maximization problem under proportional transaction costs, where the market is modeled by solvency cones and the preferences are represented by component-wise utility functions. The motivation behind this is that independent from holdings in the other {assets}, the investor has a scalar utility function for each of them. Clearly, this is a special type of vector-valued utility function. We consider this set up as a special case and discuss the certainty equivalent and indifference price bounds concepts introduced here under this set up.

For practical reasons, it is important that the set valued certainty equivalent and the buy and sell price bounds introduced here can be computed as well. Indeed, we show that the computations require solving convex vector optimization problems (CVOPs).
	
In the literature, there are several algorithms and methods to `solve'  some specific subclasses of CVOPs, see the survey paper by Ruzika and Wiecek \cite{surveyruzika}. For more general problems, Ehrgott, Shao and Sch\"obel \cite{ehrgott} developed an approximation algorithm and more recently, L\"{o}hne, Rudloff and Ulus \cite{cvop} generalized Benson's algorithm (see \cite{benson}) and proposed two algorithms to generate approximations to the set of all efficient values in the objective space. One of the algorithms is the extension of the one proposed in \cite{ehrgott} while the second one is the `geometric dual' of it.
	
We show that as long as not empty, the set-valued (strong/weak) certainty equivalent can be computed by solving CVOPs. Moreover, as in the complete preference case, the computations of the buy and sell price bounds require solving the utility maximization problem, which is naturally modeled as a CVOP in our setting. We use the algorithms provided in \cite{cvop} to approximately solve the utility maximization problem. We show that it is possible to compute inner and outer `approximations' to the set-valued buy and sell prices by solving CVOPs where the solution of the utility maximization problem is taken as an input. As in the complete preference case, solving the optimization problem(s) also yields the hedge positions. In the example section, we {illustrate} the economic meaning of our definitions of the set-valued certainty equivalent as well as the set-valued buy and sell prices. 

The organization of the paper is as follows. In Section~\ref{sect:prelim}, we introduce the notation that is used throughout this paper and review some basic results on classical utility indifference pricing and on representations of incomplete preference relations. 
In Section~\ref{sect:CE}, we introduce the set-valued definition of the certainty equivalent {as well as the strong and weak version of it}. Set-valued buy and sell prices as well as indifference price bounds are introduced in Section~\ref{sect:indifference}. In this section, we also prove the properties of set-valued buy and sell prices. The computations of these set valued quantities are explained in Section~\ref{sect:computations}. The last section provide some special cases and numerical examples. In Section~\ref{subsect:examples1}, we set $d=1$ and consider univariate utility functions, while in Section~\ref{sect:Conical}, we consider the conical market model for $d>1$.

\section{Preliminaries}
\label{sect:prelim}
In the following we introduce some basic notions regarding order relations and convex vector optimization problems. Then, we review the basic definition of indifference pricing in the classical expected utility theory. Finally, we recall the utility representations for incomplete preference relations that will be used here.
\subsection{Order Relations}
\label{subsect:PrelimOrders}
A convex cone $K \subseteq \R^q$ is said to be \emph{solid}, if it has a non-empty interior; \emph{pointed} if it does not contain any line; and \emph{non-trivial} if $\{0\} \subsetneq K \subsetneq \R^q$. A non-trivial convex pointed cone K defines a partial ordering $\leq_K$ on $\R^q$: $v \leq_K w$ if and only if $w - v \in K$; $v <_K w$ if and only if $w - v \in \Int K$; and $v \lneq_K w$ if and only if $w - v \in K\setminus\{0\}$.

Let $K\subseteq \R^q$ be a non-trivial convex pointed cone and $X \subseteq \R^d$ a convex set. A function $f:X \rightarrow \R^q$ is said to be \emph{$K$-convex} if $f(\alpha x+(1-\alpha)y) \leq_K \alpha f(x)+(1-\alpha)f(y)$ holds for all $x,y \in X$, $\alpha \in [0,1]$, and $K$-concave if $-f$ is $K$-convex., see e.g. \cite[Definition 6.1]{luc}. 

Let $A$ be a subset of $\R^q$. A point $y \in A$ is called a \emph{$K$-minimal element} of A if there exists no $x\in A\setminus\{y\}$ with $x \leq_K y$. If $K$ is solid, then a point $y \in A$ is called \emph{weakly $K$-minimal element} if there exists no $x\in A$ with $x <_K y$. The set of all (weakly) $K$-minimal elements of $A$ is denoted by $\nwMink(A)$. The set of (weakly) $K$-maximal elements is defined by  ${\rm (w)Max}_K\,(A):={\rm (w)Min}_{-K}\,(A)$. 

A convex pointed cone $K$ also defines two order relations on the power set of $\R^q$ as follows ({see for instance \cite{minimalset,Kuroiwa}}): For $A, B \subseteq \R^q$
\begin{equation}\label{setrelations}
A \preccurlyeq_K B :\iff B \subseteq A + K, \quad A \curlyeqprec_K B :\iff A \subseteq B - K.
\end{equation}
A set $A \subseteq \R^q$ is said to be an \emph{upper set} with respect to $K$ if  $A = A + K$, 
a \emph{lower set} with respect to $K$ if $A = A-K$. 
If $A$ is a closed upper set, then $\wMink A = \bd A$; similarly if $A$ is a closed lower set, then $\wMaxk A = \bd A$.

If $A$ and $B$ are closed upper sets {with respect to $K$}, then we have 
\begin{equation*}
\label{eq:comp_upper}
A \preccurlyeq_K B \iff \Mink A \preccurlyeq_K \Mink B \iff A \supseteq B;
\end{equation*}	
similarly, if $A$ and $B$ are closed lower sets {with respect to $K$}, then it is true that 
\begin{equation*}
\label{eq:comp_lower}
A \curlyeqprec_K B \iff \Maxk A \curlyeqprec_K \Maxk B \iff A \subseteq B.
\end{equation*} 

Whenever the ordering cone is $\R^q_+ = \{r\in \R^q \st r_i \geq 0, \: i=1,\ldots,q\}$, we write $\leq, \preccurlyeq, \curlyeqprec$  instead of $\leq_{\R^q_+}, \preccurlyeq_{\R^q_+}, \curlyeqprec_{\R^q_+}$; we say (weakly) minimal/maximal element instead of (weakly) $\R^q_+$-minimal / $\R^q_+$-maximal element, and denote the set of all such elements by $\nwMin(\cdot)$ / $\nwMax(\cdot)$. Moreover, an upper (lower) set with respect to $\R^q_+$ is simply said to be an upper (lower) set.

\subsection{Convex Vector Optimization Problems}
\label{subsect:prelimCVOP}

A convex vector optimization problem is to
\begin{align*} \label{(P)}
\text{minimize~} f(x) \text{~~~~with respect to~~} \leq_K \text{~~subject to~}  g(x) \leq_M 0, \tag{P}
\end{align*}
where $K\subseteq \R^q$, and $M\subseteq \R^m$ are non-trivial pointed convex ordering cones with nonempty interior, the vector-valued objective function $f(x)= (f_1,\ldots,f_q): \R^d \rightarrow \R^q$ is $K$-convex, and the constraint function $g = (g_1,\ldots,g_m) : \R^d \rightarrow \R^m$ is $M$-convex (see for example \cite{cvop,luc}). We denote the feasible region of~\eqref{(P)} by $\mathcal{X}:=\{x\in \R^d \st g(x)\leq_M 0\}$.

The set $\mathcal{P} := \cl (f(\mathcal{X})+K)$ is called the \emph{upper image}; it is an upper set with respect to $K$ and it satisfies $\wMin_K(\mathcal{P}) = \bd \mathcal{P}$.~\eqref{(P)} is said to be \emph{bounded} if the upper image is contained in $\{y\} + K$ for some $y \in \R^q$, that is, if  $\mathcal{P} \subseteq \{y\} + K$. A point $\bar{x}\in \mathcal{X}$ is a \emph{(weak) minimizer} for~\eqref{(P)} if $f(\bar{x})$ is a (weakly) $K$-minimal element of $f(\mathcal{X})$.

We consider a solution concept for CVOPs that relates a solution to an inner and an outer approximation of the upper image $\mathcal{P}$. Throughout $k \in \Int K$ is fixed.

\begin{definition}[\cite{cvop}]\label{defn:solnCVOP}
	For a bounded problem \eqref{(P)}, a nonempty finite set $\mathcal{\bar{X}}\subseteq \mathcal{X}$ is called a \emph{finite (weak) $\epsilon$-solution of \eqref{(P)}} if it consists of only (weak) minimizers and satisfies
	\begin{equation} \label{eq:weaksoln}
	\conv f(\mathcal{\bar{X}})+K-\epsilon \{k\} \supseteq \mathcal{P}.
	\end{equation}
\end{definition}

There are many different scalarization techniques for vector optimization problems. Two well-known ones will be used throughout.

The weighted sum scalarization of~\eqref{(P)} for $w \in \R^q$ is defined as the convex program
\begin{align*}\label{P_1(w)}
\text{minimize~} w^Tf(x) \text{~~subject to~}  g(x) \leq_M 0. \tag{$P_w$}
\end{align*}
The following proposition is well-known for CVOPs, see e.g.~\cite{jahn}. Here $K^+:=\{y\in\R^q \st \forall k\in K: k^Ty\geq 0 \}$ is the positive dual cone of $K$.
\begin{proposition}[\cite{jahn}]\label{prop:scalarization}
	An optimal solution of~\eqref{P_1(w)} for $w\in K^+\setminus\{0\}$ is a weak minimizer of~\eqref{(P)}. Moreover, if $\mathcal{X}\subseteq \R^d$ is a non-empty closed set, then for each weak minimizer $\bar{x}$ of~\eqref{(P)}, there exists $w \in K^+\setminus \{0\}$ such that $\bar{x}$ is an optimal solution to~\eqref{P_1(w)}.
\end{proposition}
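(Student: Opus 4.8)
The plan is to prove the two assertions separately, each via a standard convex-analytic argument. For the first assertion, suppose $\bar x$ is an optimal solution of~\eqref{P_1(w)} for some $w\in K^+\setminus\{0\}$, and assume for contradiction that $\bar x$ is not a weak minimizer of~\eqref{(P)}; then there exists $x\in\mathcal X$ with $f(x)<_K f(\bar x)$, i.e.\ $f(\bar x)-f(x)\in\Int K$. Since $w\in K^+\setminus\{0\}$, one has $w^T u>0$ for every $u\in\Int K$ (this is the elementary fact that a nonzero element of the dual cone is strictly positive on the interior of the cone, which follows because $\Int K\subseteq\{u : w^Tu>0\}$ whenever $w\in K^+\smz$; if $w^Tu=0$ for some $u\in\Int K$, a small ball around $u$ lies in $K$ and yields points with negative $w$-value). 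Hence $w^Tf(x)<w^Tf(\bar x)$, contradicting optimality of $\bar x$ for~\eqref{P_1(w)}. This direction requires no convexity and no closedness.

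For the converse, let $\bar x$ be a weak minimizer of~\eqref{(P)}, so $f(\bar x)\in\wMin_K(f(\mathcal X))$, and recall $\mathcal P=\cl(f(\mathcal X)+K)$. The idea is to separate the point $f(\bar x)$ from the convex set $f(\mathcal X)+\Int K$. First I would check that $f(\bar x)\notin f(\mathcal X)+\Int K$: otherwise $f(\bar x)=f(x)+u$ for some $x\in\mathcal X$, $u\in\Int K$, giving $f(x)<_K f(\bar x)$ and contradicting weak minimality. Next, the set $f(\mathcal X)+\Int K$ is convex (as $\mathcal X$ is convex and $f$ is $K$-convex, $f(\mathcal X)+K$ is convex, and adding $\Int K$ keeps convexity) and open and nonempty. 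By the separating hyperplane theorem there exists $w\in\R^q\setminus\{0\}$ with $w^Ty\ge w^Tf(\bar x)$ for all $y\in f(\mathcal X)+\Int K$. Letting the $\Int K$ component run along a fixed $k\in\Int K$ scaled arbitrarily large forces $w^Tk\ge 0$, and more generally $w^T\kappa\ge0$ for all $\kappa\in K$ (take $y=f(x)+\Int K$ neighborhoods of $f(x)+\kappa$), so $w\in K^+\setminus\{0\}$. Finally, taking $y\to f(x)$ in the closure gives $w^Tf(x)\ge w^Tf(\bar x)$ for all $x\in\mathcal X$, i.e.\ $\bar x$ is optimal for~\eqref{P_1(w)}.

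The main obstacle — and the only place closedness of $\mathcal X$ (or rather, of the relevant image set) is used — is ensuring the separating functional $w$ is genuinely nonzero and that the inequality $w^Tf(x)\ge w^Tf(\bar x)$ is attained in the limit rather than only on the open set $f(\mathcal X)+\Int K$. The clean way to handle this is to separate $f(\bar x)$ from the \emph{open} convex set $f(\mathcal X)+\Int K$ (which is legitimate precisely because $f(\bar x)$ is not in it), obtaining a supporting hyperplane; a proper (non-vertical-type degeneracy) separation is automatic here since the set is open and nonempty. I would then pass to the closure $\mathcal P$ by continuity of $y\mapsto w^Ty$. One should also record the trivial boundary remark that $f(\bar x)\in\bd\mathcal P$, which is consistent with $\wMin_K(\mathcal P)=\bd\mathcal P$ noted earlier. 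I do not expect the $K$-convexity of $f$ to cause difficulty beyond guaranteeing convexity of $f(\mathcal X)+K$; the role of the fixed $k\in\Int K$ is purely to extract the dual-feasibility $w\in K^+$ from the separation inequality.
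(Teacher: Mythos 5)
The paper does not prove this proposition at all; it is quoted verbatim from the cited reference \cite{jahn}, so there is no in-paper argument to compare against. Your proof is the standard one that the citation points to and it is correct: positivity of any $w\in K^+\setminus\{0\}$ on $\Int K$ gives the first assertion, and separating $f(\bar{x})$ from the open convex set $f(\mathcal{X})+\Int K$ (convex because $f(\mathcal{X})+\Int K=(f(\mathcal{X})+K)+\Int K$ and $f(\mathcal{X})+K$ is convex by $K$-convexity of $f$) gives the second. One minor observation: despite your remark, your argument never actually uses the closedness of $\mathcal{X}$ --- separation of a point from an open convex set not containing it already yields a nonzero functional, and the inequality passes to $f(x)$ by letting the $\Int K$ perturbation shrink --- so you have in fact proved the converse without that hypothesis, which is harmless.
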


The Pascoletti-Serafini~\cite{PascolettiSerafini} scalarization of~\eqref{(P)} for point $v \in \R^q$ and direction $d \in \R^q$ is defined as the convex program
\begin{align*}\label{P_2(v,d)}
\text{minimize~} \rho \text{~~subject to~}  g(x) \leq_M 0, \: f(x)-\rho d \leq_K v, \: \rho \in \R . \tag{$P_{(v,d)}$}
\end{align*}
\begin{proposition}[\cite{eichfelder}]\label{prop:scalarizationPS}
	{Let $(  \bar\rho,  \bar x)$ be an optimal solution of~\eqref{P_2(v,d)} for $v \in  \R^q, d \in K\setminus\{0\}$. Then $ \bar x$ is a weak minimizer of~\eqref{(P)}. }
\end{proposition}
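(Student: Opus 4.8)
The plan is to argue by contradiction: suppose $\bar x$ is \emph{not} a weak minimizer of~\eqref{(P)}, and derive a feasible point for~\eqref{P_2(v,d)} with objective value strictly smaller than $\bar\rho$, contradicting optimality of $(\bar\rho,\bar x)$. First I would unwind the definitions. Since $(\bar\rho,\bar x)$ is feasible for~\eqref{P_2(v,d)}, we have $g(\bar x)\leq_M 0$ (so $\bar x\in\mathcal{X}$) and $f(\bar x)-\bar\rho d\leq_K v$, i.e.\ $v-f(\bar x)+\bar\rho d\in K$. If $\bar x$ is not a weak minimizer, then $f(\bar x)$ is not a weakly $K$-minimal element of $f(\mathcal{X})$, so there exists $x'\in\mathcal{X}$ with $f(x')<_K f(\bar x)$, that is, $f(\bar x)-f(x')\in\Int K$.

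The key step is to combine these two cone memberships to produce a strictly smaller value of $\rho$. I would look for $\rho' < \bar\rho$ such that $f(x')-\rho' d\leq_K v$, i.e.\ $v - f(x') + \rho' d \in K$. Write $v - f(x') + \rho' d = \big(v - f(\bar x)+\bar\rho d\big) + \big(f(\bar x)-f(x')\big) + (\rho'-\bar\rho)d$. The first bracket lies in $K$, the second in $\Int K$. Since $d\in K\setminus\{0\}$ and $K$ is a convex cone, $(\bar\rho - \rho')d \in K$ whenever $\rho'\le\bar\rho$; but I actually want to \emph{subtract} a multiple of $d$, so I should instead exploit that the second term lies in the \emph{interior}: because $f(\bar x)-f(x')\in\Int K$, there is $\delta>0$ with $f(\bar x)-f(x') - \delta d\in K$ (an interior point minus a small multiple of any fixed vector stays in the set). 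Then, taking $\rho' := \bar\rho - \delta < \bar\rho$, we get
\[
v - f(x') + \rho' d = \underbrace{\big(v-f(\bar x)+\bar\rho d\big)}_{\in K} + \underbrace{\big(f(\bar x)-f(x')-\delta d\big)}_{\in K}\in K,
\]
using that $K+K\subseteq K$. Hence $(\rho',x')$ is feasible for~\eqref{P_2(v,d)} with $\rho'<\bar\rho$, contradicting the optimality of $(\bar\rho,\bar x)$. Therefore $\bar x$ must be a weak minimizer.

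The main obstacle — really the only nontrivial point — is justifying that an interior point of $K$ remains in $K$ after subtracting a small positive multiple of the fixed vector $d$; this is exactly the elementary fact that $\Int K$ is open, so a whole ball around $f(\bar x)-f(x')$ lies in $K$, and $\delta d$ can be chosen to keep us inside that ball. Everything else is bookkeeping with the cone properties $K+K\subseteq K$ and $\R_{\ge 0}K\subseteq K$. One should also note that the hypothesis $d\in K\setminus\{0\}$ (rather than merely $d\in K$) is what prevents the degenerate case $d=0$, in which $\rho$ would not interact with the constraint at all; with $d\neq 0$ and $\delta>0$ the shift $\delta d$ is a genuine movement. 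This mirrors the standard argument found in~\cite{eichfelder}, specialized to the convex setting of~\eqref{(P)}.
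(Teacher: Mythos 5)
Your argument is correct: the decomposition $v-f(x')+\rho'd=\bigl(v-f(\bar x)+\bar\rho d\bigr)+\bigl(f(\bar x)-f(x')-\delta d\bigr)$ together with the openness of $\Int K$ is exactly the standard Pascoletti--Serafini argument, and the paper itself gives no proof of this proposition --- it is stated as a citation to \cite{eichfelder}, whose proof proceeds along the same lines. One small remark: the hypothesis $d\in K\smz$ is not actually what makes your contradiction step work (your openness argument never uses $d\in K$, and even $d=0$ would formally go through); rather, $d\neq 0$ is needed so that \eqref{P_2(v,d)} can have an optimal solution at all, since for $d=0$ the objective $\rho$ is unconstrained below and the hypothesis of the proposition is vacuous.
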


A maximization problem with $K$-concave objective function $f(\cdot)$ is the negative of a CVOP with objective function $-f(\cdot)$. Clearly, \emph{the lower image} $\cl(f(\mathcal{X})-K)$ of a maximization problem is the negative of the upper image of the corresponding CVOP.

\begin{remark}\label{rem:innerouterappr}
	In~\cite{cvop}, L\"{o}hne, Rudloff and Ulus proposed primal and dual approximation algorithms to solve bounded CVOPs where ordering cones $K$ and $M$ are polyhedral. Both algorithms return finite weak $\epsilon$-solutions to~\eqref{(P)}. A weak $\epsilon$-solution $\bar{\mathcal{X}}$ to~\eqref{(P)} provides an inner and an outer approximation to the upper image $\mathcal{P}$ as
	$$\mathcal{P}_{\text{in}} := \conv f(\bar{\mathcal{X}})+K \subseteq \mathcal{P} \subseteq \conv f(\bar{\mathcal{X}})+K-\epsilon \{k\} =: \mathcal{P}_{\text{out}}.$$
	Note that by Proposition~\ref{prop:scalarization}, there exists $w_{x} \in K^+\setminus\{0\}$ such that $x \in \bar{\mathcal{X}}$ is an optimal solution to the weighted sum scalarization problem~\eqref{P_1(w)} for $w = w_x$. The algorithms in~\cite{cvop} returns also the set of these weight vectors $W = \{w_x \in K\setminus\{0\} \st x \in \bar{\mathcal{X}}\}$. Note that whenever a problem is not known to be bounded, the algorithms in~\cite{cvop} may be employed and as long as they return a solution, it is guaranteed that the problem is bounded and the solutions returned by the algorithm is correct. 
\end{remark}

If no ordering cone is given in~\eqref{(P)}, then it is taken as the positive orthant, that is, $K = \R^q_+$.

\subsection{Classical Utility Indifference Pricing}
\label{subsect:PrelimIndiff}
Utility indifference pricing under a complete preference, which is represented by the expectation of a utility function $u:\R \rightarrow \R \cup \{-\infty\}$ is well-defined and studied in the literature, see the overview by Henderson and Hobson~\cite{henderson} and references therein. Let $(\Omega,\mathcal{F},\P)$ be a probability space and $L^0(\mathcal{F},\R)$ be the set of all $\mathcal{F}$-measurable real-valued random variables. Recall that the utility indifference buy price $p^b \in \R$ is the price at which the investor is indifferent between paying nothing and not having claim $C_T \in L^0(\mathcal{F},\R)$, and paying $p^b$ at time $t=0$ to receive the claim at time $t=T$. In other words, $p^b$ is a solution of 
\begin{align*}
\sup_{V_T \in \mathcal{A}(x_0-p^b)}\E u(V_T+C_T) = \sup_{V_T \in \mathcal{A}(x_0)} \E u(V_T),
\end{align*}
where $x_0$ is the initial endowment, and $\mathcal{A}(\cdot)$ is the set of all wealth which can be generated from the corresponding initial wealth. Similarly, the utility indifference sell price $p^s \in \R$ is defined\footnote{There are alternative approaches to define the indifference buy and sell prices in the literature. Indeed, there is a recent discussion stating that the indifference prices provided above satisfy the so called ``complementary symmetry property", see for instance~\cite{Lew,chudziak2020}; and there are experiments showing that this property is systematically violated~(\cite{Birn}). Accordingly, it is possible to define, for instance, the utility indifference sell price as a solution of 
	\begin{align}
	\label{alterndef}
	\sup_{V_T \in \mathcal{A}(x_0)}\E u(V_T+C_T) = \sup_{V_T \in \mathcal{A}(x_0+p^s)} \E u(V_T),
	\end{align}
which accounts for the situation that one owns $C_T$ in order to sell it. Thus, the agent's initial pre-trade position is $(x_0, C_T)$, that is, $x_0$ at time zero, and $C_T$ initial wealth at time $T$. This would lead to an alternative description for $P^s$.   The definition in \eqref{eq:PbPs_scalar} corresponds to the situation, where the agent's initial pre-trade position is $(x_0, 0)$, that is, $x_0$ at time zero, and zero initial wealth at time T, see also \cite{henderson}. This could also be interpreted as leading to the indifference short-selling price, with \eqref{alterndef} as the indifference sell price. However, when we discuss the extensions of these concepts in Section~\ref{sect:indifference}, we keep the usual terminology and the sets as in \eqref{eq:PbPs_scalar}, since they are quite standard in Financial Mathematics, see for instance \cite{carmonabook,henderson,cheridito,benedetti2012,henderson2016}.}
as a solution of 
\begin{align*}
\sup_{V_T \in \mathcal{A}(x_0+p^s)}\E u(V_T-C_T) = \sup_{V_T \in \mathcal{A}(x_0)} \E u(V_T).
\end{align*}
Note that indifference buy and sell prices can be seen as the bounds on (buy and sell) prices for which one has a strict preference of buying and selling, respectively. Then, one can describe the utility indifference buy price as the {boundary} of the set $P^b$ of all prices at which buying the claim is at least as preferable as taking no action. Similarly, the utility indifference sell price is the {boundary} of the set $P^s$ of all prices at which selling the claim is at least as preferable as taking no action, respectively. More precisely, if we define
\begin{align}\label{eq:PbPs_scalar}
\begin{split}
P^b &:= \{p\in \R \: \st \sup_{V_T \in \mathcal{A}(x_0-p)}\E u(V_T+C_T) \geq \sup_{V_T \in \mathcal{A}(x_0)} \E u(V_T)\},\\
P^s &:= \{p\in\R \: \st \sup_{V_T \in \mathcal{A}(x_0+p)}\E u(V_T-C_T) \geq \sup_{V_T \in \mathcal{A}(x_0)} \E u(V_T)\},
\end{split}
\end{align}  
then, as long as prices $p^b$ and $p^s$ exists we have $P^b = (-\infty, p^b]$, $P^s = [p^s, \infty)$. Hence, $p^b = \bd P^b$ and $p^s = \bd P^s$. This point of view will be helpful when defining indifference prices for incomplete preference relations.

\subsection{Utility Representations for Incomplete Preferences}
\label{subsect:prelim1}
Let $(\Omega,\mathcal{F},\P)$ be a {finite} probability space, and $L^0(\mathcal{F},\R^d)$ be the set of all $\mathcal{F}$-measurable random variables which take their values in $\R^d$. {Denote the set of all continuous extended real-valued functions on $\R^d$ by $\mathcal{C}(\R^d)$,} and the set of all probability measures on $\Omega$ by $\mathcal{M}_1(\Omega)$. 

Throughout this paper, we consider the preference relations on the set $L^0(\mathcal{F},\R^d)$. Moreover, we consider a utility representation given as follows.
\begin{definition}\label{defn:ut_repr}
	A preference relation $\succsim$ on $L^0(\mathcal{F},\R^d)$ is said to admit a \emph{multi-prior expected multi-utility representation} if there exists a non-empty subset $\mathcal{U}$ of ${\mathcal{C}(\R^d)}$ and a non-empty subset $\mathcal{Q}$ of $\mathcal{M}_1(\Omega)$ such that, for random variables $Y, Z$ in $L^0(\mathcal{F},\R^d)$, we have 
	$$Y \succsim Z \iff \:\: \forall u\in\mathcal{U}, \forall Q \in\mathcal{Q}: \:\:\: \E_Q u(Y) \geq \E_Q u(Z).$$ 
\end{definition}

This type of representations\footnote{Note that it is also possible to consider the slighly more general preference relation in~\cite{Nau}, where there is a set of probability measure and utility pairs, say, $\mathcal{UQ}$ and $$Y \succsim Z \iff \:\: \forall (u,Q)\in\mathcal{UQ}: \:\:\: \E_Q u(Y) \geq \E_Q u(Z).$$ In this case, we would assume that there exists finitely many pairs in $\mathcal{UQ}$ instead of what is stated in Assumption~\ref{assump:utility}\emph{a.} However, keeping the representation as in Definition~\ref{defn:ut_repr} will be useful in simplifying some expressions throughout.\label{footnote:QUpairs}} for incomplete preferences are studied for instance in~\cite{Nau,ok2012,galaabaatar2013}. As a special case we also consider preference relations which admit a \emph{multi-prior expected single-utility representation} ($\mathcal{U}$ is a singleton) and a \emph{single-prior expected multi-utility representation} ($\mathcal{Q}$ is a singleton) as defined in~\cite{ok2012}.

\begin{remark}
	As usual we use the following notation throughout:
	$$Y \sim Z \iff Y\succsim Z \text{~and~} Z\succsim Y.$$ 
\end{remark}

In \cite{ok2012}, the necessary and sufficient conditions (assumptions both on the preference relation and on the set of the acts) for a preference relation to admit either a multi-prior expected single-utility or a single-prior expected multi-utility representation, where the prize space can be any {compact} metric space, are shown. Moreover, in~\cite{galaabaatar2013}, the characterization of multi-prior expected multi-utility representation, where the price space is not allowed to be $\R^d$ but it is a finite set, is given. Throughout this paper, we consider the multi-prior expected multi-utility representations of preference relations as given by Definition~\ref{defn:ut_repr}. Moreover, the functions $u \in \mathcal{U}$ are assumed to be multivariate utility functions defined  as follows\footnote{In~\cite{multivarutility}, Campi and Owen define a multivariate utility function in a similar way. Different from Definition~\ref{def:multivar_u}, they require $C_u:=\cl(\dom u)$ to be a convex cone such that $\R^d_+\subseteq C_u \neq \R^d$ and $u$ to be increasing with respect to the partial order $\leq_{C_u}$. Note that as $C_u\supseteq \R^d_+$, our definition is more general.}.

\begin{definition}\label{def:multivar_u}
	A proper concave function {$u: \R^d \rightarrow \R \cup \{\pm \infty\}$} is a multivariate utility function if $u$ is increasing with respect to the partial order $\leq$ on $\R^d$.
\end{definition}

{The preimage of the function $u$ is denoted by $u^{-1}$, that is, for $S \subseteq \R$ we have $u^{-1}(S) = \{x \in \R^d \st f(x) \in S\}.$ If $d=1$ and $u$ is invertible, then $u^{-1}(\cdot)$ corresponds to the inverse function as usual.}

\begin{assumption}\label{assump:utility}
	Throughout, we assume the following.
	\begin{enumerate}[a.]
		\item The preference relation admits a multi-prior expected multi-utility representation where $\mathcal{U}=\{u^1,\ldots, u^{r}\}$ and $\mathcal{Q} = \{Q^1 \ldots Q^{s}\}$ for some $r,s \geq 1$ with $q := rs$. 
		\item ${\mathcal{U}\subseteq \mathcal{C}(\R^d)}$ and any $u \in \mathcal{U}$ is a multivariate utility function.
		\item Any $u \in \mathcal{U}$ is strictly increasing in the sense that $x < y$ implies $u(x) < u(y)$. 
	\end{enumerate}
\end{assumption}

\section{Certainty Equivalent for Incomplete Preferences}
\label{sect:CE}

In the classical utility theory, where the preference relation is complete and represented by a single univariate utility function, the certainty equivalent of a random variable $Z$ is defined as the deterministic amount which would yield the same utility as the expected utility of $Z$. This amount is unique and can be computed if the utility function is bijective.

Under incomplete preferences, there is not necessarily a unique certainty equivalent of a random variable. In the past literature, usually a candidate with nice properties is picked and considered as the certainty equivalent. One of the choices is the \emph{worst-case (strong) certainty equivalent} when $d=1$. If $\mathcal{Q}$ is a singleton, i.e., the utility representation is given as a single-prior expected multi-utility representation, where the utility functions are bijective, then \emph{the strong certainty equivalent of $Z$} is given by $\inf_{u \in \mathcal{U}} u^{-1}(\E u(Z))$, see \cite{CEworst}. Similarly, one could consider the \emph{weak certainty equivalent}, namely, $\sup_{u \in \mathcal{U}} u^{-1}(\E u(Z))$. Applying the same idea to an incomplete preference that admits a multi-prior expected multi-utility representation for $d=1$, it is possible to consider the strong and the weak certainty equivalents given by $\inf_{Q \in \mathcal{Q},u \in \mathcal{U}} u^{-1}(\E_Q u(Z))$ and $\sup_{Q \in \mathcal{Q},u \in \mathcal{U}} u^{-1}(\E_Q u(Z))$, respectively. However, it is not clear if (or how) these strong and weak certainty equivalent concepts generalize to the case where $d > 1$ since the preimage $u^{-1}$ of a multivariate function $u$ yields a subset of $\R^d$ instead of a real number.

As already motivated in Section~\ref{sect:Intro} for a more general setting, we will now present the most intuitive definition of a certainty equivalent for the case $d \geq 1$, but we will see that this definition does not always provide a meaningful concept. Thus, instead, we will use the insights from Section~\ref{subsect:PrelimIndiff}, where we rewrote the scalar indifference prices as the upper and lower bounds of the set of all buy and sell prices, and we will see that this concept leads to a more suitable definition of a (weak and strong) certainty equivalent for the case $d \geq 1$.
 
We define the certainty equivalent of a random variable $Z \in L^0(\mathcal{F},\R^d)$ as a subset of $\R^d$, $d \geq 1$ as follows.
\begin{definition}\label{defn:CE}
	The certainty equivalent for $Z \in L^0(\mathcal{F},\R^d)$ is the set $$\C(Z) := \{c\in \R^d \st c \sim Z\}.$$
\end{definition} 
Let us consider the following sets
\begin{equation*} \label{eq:upperlowerCE}
\Cupp(Z) := \{c\in \R^d \st c \succsim Z\}, \:\:\:\:\:\: \Clow(Z):=\{c\in \R^d \st Z \succsim c\}.
\end{equation*}
Clearly, we have $\C(Z) = \Cupp(Z) \cap \Clow(Z)$. Note that it is highly possible that this intersection is empty as the preference relation is incomplete. In that case, considering $\Cupp(Z)$ and $\Clow(Z)$ would provide the full information for the decision maker.  

When we consider preferences which admit a multi-prior expected multi-utility representation, under Assumption~\ref{assump:utility}, these sets can be written as follows 
\begin{align}
\begin{split} \label{eq:CupClowCeq}
\Cupp(Z) 
&= \bigcap_{u \in \mathcal{U}}\{c \in \R^d \st u(c) \geq \sup_{Q\in\mathcal{Q}}\E_Q u(Z)\}; \\
\Clow(Z) 
&= \bigcap_{u \in \mathcal{U}}\{c \in \R^d \st u(c) \leq \inf_{Q\in\mathcal{Q}}\E_Q u(Z)\};\\
\C(Z) 
&= \bigcap_{u \in \mathcal{U},Q\in\mathcal{Q}}u^{-1}(\E_Q u(Z)).
\end{split}
\end{align}  

\begin{remark}\label{rem:CupClow}
	By continuity of the utility functions $u \in \mathcal{U}$, the sets $\Cupp(Z)$ and $\Clow(Z)$ are closed; by monotonicity of $u \in \mathcal{U}$, $\Cupp(Z)$ is an upper set and $\Clow(Z)$ is a lower set. Moreover, as $u \in \mathcal{U}$ are concave, $\Cupp(Z)$ is a convex set, whereas $\Clow(Z)$ is not convex in general. 
\end{remark}

\begin{proposition}\label{prop:CEnointerior}
	Under Assumption~\ref{assump:utility}, $\Int \C(Z) = \emptyset$ for any  $Z \in L^0(\mathcal{F},\R^d)$.	
\end{proposition}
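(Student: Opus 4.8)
The plan is to show that $\C(Z)$, being an intersection of the preimages $u^{-1}(\E_Q u(Z))$ over $u \in \mathcal U$ and $Q \in \mathcal Q$, is contained in a set with empty interior, namely a single level set $u^{-1}(\gamma)$ of one utility function. Fix any $u \in \mathcal U$ and any $Q \in \mathcal Q$, and set $\gamma := \E_Q u(Z) \in \R$. From \eqref{eq:CupClowCeq} we have $\C(Z) \subseteq u^{-1}(\gamma) = \{c \in \R^d \st u(c) = \gamma\}$, so it suffices to prove that this level set has empty interior. Suppose, for contradiction, that there is an open ball $B(c_0,\delta) \subseteq u^{-1}(\gamma)$, i.e. $u$ is constant equal to $\gamma$ on $B(c_0,\delta)$. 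Pick any $x$ with $c_0 < x$ and $x \in B(c_0,\delta)$ (e.g. $x = c_0 + \tfrac{\delta}{2d}\mathbf 1$, which has every coordinate strictly larger than $c_0$ and lies in the ball). Then Assumption~\ref{assump:utility}\emph{c.} (strict monotonicity) gives $u(c_0) < u(x)$, contradicting $u(c_0) = u(x) = \gamma$.

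Therefore $u^{-1}(\gamma)$ contains no open ball, hence $\Int u^{-1}(\gamma) = \emptyset$, and since $\C(Z) \subseteq u^{-1}(\gamma)$ we conclude $\Int \C(Z) = \emptyset$. (If $\mathcal U$ or $\mathcal Q$ were empty the statement would be vacuous or the preference trivial, but Assumption~\ref{assump:utility}\emph{a.} guarantees $r,s \geq 1$, so such a $u$ and $Q$ exist, and $\gamma$ is finite because $\Omega$ is finite and $u$ is continuous, hence finite-valued on the finitely many values of $Z$ — more carefully, $u$ is proper concave so could take the value $-\infty$; but if $u(Z(\omega)) = -\infty$ for some $\omega$ with $Q(\omega)>0$ then $\gamma = -\infty$ and $u^{-1}(\gamma) = u^{-1}(\{-\infty\})$, which still has empty interior since on any open ball $u$ would be $-\infty$, again contradicting strict monotonicity which forces $u(c_0) < u(x) \leq +\infty$, impossible if both equal $-\infty$; so the argument is unaffected.)

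The only subtlety — and the one point worth stating carefully rather than the genuinely hard part, since there is no hard part here — is the interaction with the extended-real-valued nature of $u$: one must make sure the argument covers the case $\gamma = -\infty$ (equivalently, $Z$ lands in the set where some $u$ is $-\infty$). As noted above, strict monotonicity of $u$ rules out $u$ being constant on any open ball regardless of the value of that constant, so the conclusion $\Int u^{-1}(\gamma) = \emptyset$ holds uniformly. I would present the proof in the clean two-line form: fix $u \in \mathcal U$, $Q \in \mathcal Q$; then $\C(Z) \subseteq u^{-1}(\E_Q u(Z))$ by \eqref{eq:CupClowCeq}; and strict monotonicity of $u$ forces any level set of $u$ to have empty interior; done.
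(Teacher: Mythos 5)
Your proof is correct and rests on the same mechanism as the paper's: both reduce the claim to the fact that strict monotonicity of $u$ along the direction of the all-ones vector is incompatible with $u$ being constant at the level $\E_Q u(Z)$ on an open ball (the paper perturbs a supposed interior point $c$ to $c+\delta e$ and derives $c+\delta e\notin \Clow(Z)$, while you phrase the same contradiction as ``a level set of a strictly increasing function has empty interior''). Your handling of the extended-real case $\gamma=-\infty$ is a nice extra touch but, as you note, does not change the argument.
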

\begin{proof}
	Assume the contrary and let $c \in \Int \C(Z)$. Then, there exists $\delta > 0 $ such that $c + \delta e \in \C(Z)$, where $e$ denotes the vector of ones. By Assumption~\ref{assump:utility} c. and by the definition of $\Cupp(Z)$, for all $u\in\mathcal{U}$ and for all $Q\in\mathcal{Q}$, we have $$u(c+\delta e) > u(c) \geq \E_Qu(Z).$$
	Hence, for all $u \in \mathcal{U}$, it is true that $u(c+\delta e) > \inf_{Q\in\mathcal{Q}}\E_Qu(Z)$. This implies that $c+\delta e \notin \Clow(Z)$, which is a contradiction to $c+\delta e \in \C(Z)$. 
\end{proof}

Note that in many cases $\C(Z)$ is an empty set, see e.g. Example~\ref{ex:1}, and thus not a suitable concept in general. Thus, we will propose an alternative definition that is based on the insights from Section~\ref{subsect:PrelimIndiff} and define the  strong and weak certainty equivalents as follows.
\begin{definition}\label{defn:strongweakCE}
	For $Z \in L^0(\mathcal{F},\R^d)$, the \emph{strong certainty equivalent} of $Z$ is $\Cs(Z):=\bd \Clow(Z)$ and the \emph{weak certainty equivalent} of $Z$ is $\Cw(Z):= \bd \Cupp(Z)$. 
\end{definition} 
The following proposition shows the characterizations and interpretations of the strong and weak certainty equivalents. The proof is quite standard and therefore omitted.
\begin{proposition} \label{prop:weakCE}
	Let $c \in \R^d$. Then, 
	\begin{enumerate} 
		\item $c \in \Cw(Z)$ if and only if
		\begin{enumerate}[i.]
			\item $c\succsim Z$ and
			\item $c-\eps\not\succsim Z$ for all $\eps \in \Int \R^d_+$;
		\end{enumerate}
		\item $c \in \Cs(Z)$ if and only if
		\begin{enumerate}[i.]
			\item $Z\succsim c$ and 
			\item $Z\not\succsim c+\eps$ for all $\eps \in \Int \R^d_+$. 
		\end{enumerate}
	\end{enumerate}
\end{proposition}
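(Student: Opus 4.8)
The plan is to prove the two characterizations directly from the definitions, unfolding $\Cw(Z) = \bd\,\Cupp(Z)$ and $\Cs(Z) = \bd\,\Clow(Z)$ and using the structural facts from Remark~\ref{rem:CupClow} together with Assumption~\ref{assump:utility}~c. Since part~2 is entirely symmetric to part~1 (replace $\succsim$ by $\precsim$, upper sets by lower sets, $c-\eps$ by $c+\eps$), I would prove part~1 in full and remark that part~2 follows by the same argument applied to $\Clow(Z)$.

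For part~1, recall from Remark~\ref{rem:CupClow} that $\Cupp(Z)$ is a closed upper set with respect to $\R^d_+$. First I would establish the direction ``$c \in \Cw(Z) \Rightarrow$ (i) and (ii).'' Since $\Cupp(Z)$ is closed, $\bd\,\Cupp(Z) \subseteq \Cupp(Z)$, so $c \in \Cupp(Z)$, i.e.\ $c \succsim Z$, which is (i). For (ii), suppose for contradiction that $c - \eps \succsim Z$ for some $\eps \in \Int\R^d_+$, i.e.\ $c - \eps \in \Cupp(Z)$. Because $\Cupp(Z)$ is an upper set and $\eps \in \Int\R^d_+$, the point $c = (c-\eps) + \eps$ lies in $(c-\eps) + \Int\R^d_+ \subseteq \Int\,\Cupp(Z)$, contradicting $c \in \bd\,\Cupp(Z)$. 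Conversely, assume (i) and (ii); I want $c \in \bd\,\Cupp(Z)$. From (i), $c \in \Cupp(Z)$. If $c$ were an interior point, there would be $\delta > 0$ with $c - \delta e \in \Cupp(Z)$ (taking $\eps = \delta e \in \Int\R^d_+$), directly contradicting (ii). Hence $c \in \Cupp(Z) \setminus \Int\,\Cupp(Z) = \bd\,\Cupp(Z)$, using closedness of $\Cupp(Z)$.

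The only mildly delicate point — and the one I would write out carefully rather than leaving implicit — is the equivalence, for an upper set $A$, between ``$c \in \bd A$'' and ``$c \in A$ but $c - \eps \notin A$ for all $\eps \in \Int\R^d_+$.'' One direction uses that $A + \Int\R^d_+ \subseteq \Int A$ when $A$ is an upper set (so membership of some $c-\eps$ forces $c$ into the interior); the other uses that if $c \in \Int A$ then a full neighborhood of $c$, in particular $c - \delta e$ for small $\delta$, lies in $A$. This is exactly the mechanism already used in the proof of Proposition~\ref{prop:CEnointerior}, so I would simply cite that reasoning. Given this, the translation to the preference-relation language via $\Cupp(Z) = \{c : c \succsim Z\}$ and $\Clow(Z) = \{c : Z \succsim c\}$ is immediate, and Assumption~\ref{assump:utility}~c (strict monotonicity) is what guarantees the relevant sets genuinely behave as honest upper/lower sets with the strict-inequality interior property. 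I do not expect any real obstacle here; the statement is essentially a restatement of ``the boundary of a closed upper set consists of its non-interior points,'' and the proof is the short contradiction argument sketched above — which is presumably why the authors call it ``quite standard.''
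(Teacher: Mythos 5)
Your proof is correct and is exactly the ``quite standard'' argument the paper omits: unfold $\Cw(Z)=\bd\Cupp(Z)$, use that $\Cupp(Z)$ is a closed upper set so that $c-\eps\in\Cupp(Z)$ with $\eps\in\Int\R^d_+$ forces $c\in\Int\Cupp(Z)$, and conversely that $c\in\Int\Cupp(Z)$ yields $c-\delta e\in\Cupp(Z)$ for small $\delta>0$, with part~2 by symmetry for the closed lower set $\Clow(Z)$. One tiny quibble: Assumption~\ref{assump:utility}~c (strict monotonicity) is not actually needed here --- the upper/lower-set and interior properties you invoke follow already from continuity and (non-strict) monotonicity of the $u\in\mathcal{U}$ as recorded in Remark~\ref{rem:CupClow}, the rest being purely topological.
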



In the following two remarks we consider the two special cases $d = 1$ and $\mathcal{U} = \{u\}$.
\begin{remark} \label{rem:CE_d=1} 
	If the price space is $\R$, we have $\Cupp(Z) = [c^w,\infty)$ and $\Clow = (-\infty,c^s]$ for some $c^w,c^s \in \R$, see Remark~\ref{rem:CupClow}. By Proposition~\ref{prop:CEnointerior}, we have $c^s \leq c^w$. Moreover, since the utility functions $u\in\mathcal U$ are strictly increasing, the inverse function $u^{-1}$ is well defined. Indeed, by monotonicity of $u$, we have
	\begin{align*}
	\Cs(Z) = \inf_{u\in\mathcal{U},Q\in\mathcal{Q}}u^{-1}(\E_Qu(Z)) \text{~~and~~} \Cw(Z) = \sup_{u\in\mathcal{U},Q\in\mathcal{Q}}u^{-1}(\E_Qu(Z)).
	\end{align*}
	Hence, we recover the strong and the weak certainty equivalents as mentioned in the beginning of Section~\ref{sect:CE}. 
	
	When restricted to $\mathcal{Q}$ being a singleton, this definition yields the strong certainty equivalent introduced in~\cite{CEworst}. 
	
	Moreover, $\C(Z)\neq \emptyset$, if and only if $c:=c^w=c^s = u^{-1}(\E_Qu(Z))$ for all $u\in\mathcal{U}$, $Q\in\mathcal{Q}$. In this case, we have $C(Z) = \{c\}$. This observation also proves the recovery of the classical certainty equivalent whenever a complete preference admitting a von Neumann-Morgenstern utility (single-prior expected single-univariate-utility) representation is considered.   
\end{remark}
\begin{remark} \label{rem:CE_mpsu}
	If the preference relation admits a multi-prior expected single-utility representation, that is $\mathcal{U} = \{u\}$, then for any $Z \in L^0(\mathcal{F}, \R^d)$ we have
	\begin{align*}
	\Cupp(Z) = u^{-1}\bigg( \big[\sup_{Q\in \mathcal{Q}} \E_Q u(Z), \infty \big) \bigg) \text{~~and~~}
	\Clow(Z) =  u^{-1}\bigg( \big(-\infty, \inf_{Q\in \mathcal{Q}} \E_Q u(Z) \big] \bigg),
	\end{align*}  
where $u^{-1}$ is the preimage. Moreover, by the monotonicity and continuity of $u$ 
	\begin{equation*} \label{eq:C_mpsu}
	\Cw(Z) = u^{-1}\big(\sup_{Q\in \mathcal{Q}} \E_Q u(Z)\big), \:\: \Cs(Z) = u^{-1} \big(\inf_{Q\in \mathcal{Q}} \E_Q u(Z) \big). 
	\end{equation*}
	
	Note that if the preference relation is complete and admits a single-prior expected single-utility representation, that is, if $\mathcal{Q} = \{Q\}$ and $\mathcal{U} = \{u\}$, then we have 
	\begin{equation*} \label{eq:singleQ}
	\C(Z) = \Cw(Z) = \Cs(Z) = u^{-1} (\E_Q u(Z)).
	\end{equation*}  
	This suggests that for a complete preference relation represented by a single multivariate utility function $u:\R^d \rightarrow \R \cup \{-\infty\}$, the certainty equivalent of $Z$ is defined as the preimage $u^{-1}(\E_Q u(Z)) \subseteq \R^d$. 
\end{remark}

\section{Utility Indifference Pricing for Incomplete Preferences}
\label{sect:indifference}

In this section, we consider the indifference pricing problem where the preference relation is not necessarily complete. In particular, we consider the case where Assumption~\ref{assump:utility} holds. Following the footsteps of the classical definition, we first consider the `utility maximization problem' for such representations of the incomplete preferences. 

\begin{notation}
	\label{notation:1}
	We denote the vector-valued expected utility functional by $U(\cdot):L^0(\mathcal{F},\R^d) \rightarrow \R^q$, where $U(\cdot):= (\E_{Q_1} u^1(\cdot), \ldots,\E_{Q_1} u^{r}(\cdot),\ldots, \E_{Q_{s}} u^1(\cdot), \ldots \E_{Q_{s}} u^{r}(\cdot))^T$. 
\end{notation}

Now, under Assumption~\ref{assump:utility}\footnote{If we consider a representation given by a set of probability measures paired with utility functions as in \cite{Nau}, we would list all the pairs in order to obtain $U(\cdot)$ and all the results of this section would remain the same, see also Footnote~\ref{footnote:QUpairs}.}, the utility maximization problem can be seen as a vector optimization problem $P(x,C_T)$ given by
\begin{align}\begin{split}\label{(U)}
P(x,C_T):~~~~~\text{maximize~~~~~} U(Z+C_T) \text{~~subject to~~~}  Z \in \mathcal{A}(x),
\end{split}
\end{align}
where $\mathcal{A}(x)\subseteq L^0(\mathcal{F}_T,\R^d)$ is the set of all wealth that can be generated from initial endowment $x$, and $C_T \in L^0(\mathcal{F}_T,\R^d)$ is some payoff that is received at time $T$. Note that the ordering cone for this problem is the positive orthant. This is because an alternative with component-wise larger expected utility would be preferred by the decision maker.

Throughout, we assume that $\mathcal{A}(\cdot)$ satisfies the following.
\begin{assumption} \label{assmp:A} Let $x, y \in \R^d$, $\lambda \in [0,1]$ be arbitrary.
	\begin{enumerate}[a.]
		\item $\mathcal{A}(x)$ is a convex set.
		\item $\lambda \mathcal{A}(x) + (1-\lambda)\mathcal{A}(y) \subseteq \mathcal{A}(\lambda x + (1-\lambda)y)$, where the set addition and multiplication are the usual Minkowski operations.
		\item If $x \leq y$, then $\mathcal{A}(x) \subseteq \mathcal{A}(y)$. 
		\item If $V_T \in \mathcal{A}(x)$, then $V_T+r \in \mathcal{A}(x+r)$ for any $r \in \R^d$.
		\item {Let $(x^k)_{k\geq 1} \in \R^d$ be a decreasing sequence with respect to $\leq$ with $\lim_{k\rightarrow \infty} x^k = x \in \R^d$. Then, $\mathcal{A}(x)= \bigcap_{k\geq 1}\mathcal{A}(x^k)$.}
	\end{enumerate}
\end{assumption}

Two different market models and thus examples for $\mathcal{A}(x)$ will be given in Sections~\ref{sect:univariatecase} and~\ref{sect:Conical}. Note that by Assumption~\ref{assump:utility}, $u\in\mathcal{U}$ are concave, and by Assumption~\ref{assmp:A} a., $\mathcal{A}(x)$ is a convex set. Then,~\eqref{(U)} is the negative of the following convex vector optimization problem
\begin{equation}\label{eq:Uconvex}
\text{minimize~} -U(Z+C_T) \text{~~subject to~}  Z \in \mathcal{A}(x), \end{equation}
and the lower image of~\eqref{(U)} is equal to the negative of the upper image of the convex vector optimization problem given by~\eqref{eq:Uconvex}.

As introduced in Section~\ref{subsect:prelimCVOP}, there is no single optimal objective value of~\eqref{(U)} and we consider the set of all (weakly) maximal elements of the lower image. The lower image of problem~\eqref{(U)} can be written as the following set-valued function
\begin{equation}\label{eq:valuefunction}
V(x,C_T):=\cl \bigcup_{V_T \in \mathcal{A}(x)} \left(U(V_T+C_T) - \R^q_+\right).
\end{equation}

\begin{remark}\label{prop:V_properties}
	Let $C_T, \tilde{C}_T \in L(\mathcal{F}_T,\R^d)$ and $x, y \in \R^d$. Then, the following implications hold.  
	\begin{enumerate}[a.]
		\item Assumption~\ref{assump:utility} b. implies that if $C_T \leq \tilde{C}_T$, then $V(x,C_T) \subseteq V(x,\tilde{C}_T)$;
		\item Assumption~\ref{assump:utility} c. implies that if $C_T < \tilde{C}_T$, then $V(x,C_T) \subsetneq V(x,\tilde{C}_T)$;
		\item Assumption~\ref{assmp:A} c. implies that if $x \leq y$, then $V(x,C_T) \subseteq V(y, C_T)$;
		\item Assumption~\ref{assump:utility} c. and Assumption~\ref{assmp:A} d.  imply that if $x < y$, then $V(x,C_T) \subsetneq V(y, C_T)$. To see that, note that Assumption~\ref{assmp:A} d. implies that if $x < y$, then for all $V_T\in\mathcal{A}(x)$ there exists $\tilde{V}_T\in\mathcal{A}(y)$ such that $V_T < \tilde{V}_T$.
	\end{enumerate}
\end{remark}

As in the usual utility indifference pricing theory, we first consider the problems $V(x_0 - p^b, C_T)$, $V(x_0 + p^s, -C_T)$ and $V(x_0,0)$, where $p^b, p^s \in \R^d$ are candidates of indifference buy and sell prices of the claim $C_T$, respectively. Different from the scalar case, the existence of $p^b$ and $p^s$ that would satisfy $V(x_0 - p^b, C_T) = V(x_0, 0)$, respectively $V(x_0+p^s, -C_T) = V(x_0, 0)$, is not guaranteed. Thus, instead, we will base our definition of the set-valued buy and sell prices on the reformulation of the scalar indifference price given by~\eqref{eq:PbPs_scalar}. In other words, we consider the set of all prices at which one would prefer buying the claim compared to taking no action. Similarly, we consider the set of all prices at which selling the claim is preferred compared to taking no action.  Then, the indifference prices will be defined as the boundaries of those sets.

We suggest that buying the claim $C_T$ at price $p \in \R^d$ is at least as preferred as not buying it if 
\begin{equation} \label{eq:buypreference}
\Max V(x_0,0) \curlyeqprec \Max V(x_0-p,C_T)
\end{equation} holds. Indeed, as the lower images $V(\cdot,\cdot)$ are closed lower sets,~\eqref{eq:buypreference} holds if and only if 
$V(x_0,0)\curlyeqprec V(x_0-p,C_T)$, or equivalently,
\begin{equation}\label{eq:strictbuy}
V(x_0 - p, C_T) \supseteq V(x_0, 0)
\end{equation}  
holds. Similarly, selling $C_T$ at price $p\in\R^d$ is preferred to taking no action if 
\begin{equation} \label{eq:strictsell}
V(x_0 + p, -C_T) \supseteq V(x_0, 0).
\end{equation}

\begin{remark}\label{rem:characterization}
	By Proposition~\ref{prop:scalarization},~\eqref{eq:strictbuy} implies that 
	\begin{equation} \label{eq:scalarcharacterizationbuy}
	\sup_{V_T\in \mathcal{A}(x_0-p)}w^TU(V_T+C_T) \geq \sup_{V_T \in \mathcal{A}(x_0)}w^TU(V_T)
	\end{equation}
	holds for all $w \in \R^d_+$. Moreover, the reverse implication holds if $\mathcal{A}$ is a closed set. In this case, satisfying \eqref{eq:scalarcharacterizationbuy} for all $w \in\R^d_+$ can be seen as the characterization of~\eqref{eq:strictbuy}. A similar characterization can be written for~\eqref{eq:strictsell}. \end{remark}

We define the set-valued buy and sell prices as follows \footnote{Following the remark given in Footnote 1, an alternative definition for the set-valued sell price of $C_T$ would be $\tilde{P}^s(C_T) =\{p^s \in \R^d \st V(x_0 + p^s, 0) \supseteq V(x_0, C_T)\}$. With this definition, Remark~\ref{rem:Pp=-Ps} is not correct anymore. Hence, one needs to check the rest of the results in Section~\ref{sect:indifference} separately for $\tilde{P}^s(C_T)$. It is straightforward to see that Propositions~\ref{prop:lowerconvexset}-1., \ref{prop:lowerconvexset}-2. and \ref{prop:closedness} hold correct for this definition. Moreover, both the statements and the proofs of Propositions~\ref{prop:priceboundBuy} and \ref{prop:uniformcont} can be modified accordingly. However, the steps followed to prove Propositions~\ref{prop:lowerconvexset}-{3.} and \ref{prop:intersectionPbPs} can not be applied directly to the alternative definition. Note that as Proposition~\ref{prop:lowerconvexset}-{1.} holds correct, the computations of $\tilde{P}^s(C_T)$ can be done by applying similar techniques as described in Section~\ref{sect:computations}.}.

\begin{definition} \label{defn:strictprices}
	The set-valued buy price of $C_T$, $P^b(C_T)$, is the set of all prices $p^b\in \R^d$ satisfying~\eqref{eq:strictbuy}, and the set-valued sell price of $C_T$, $P^s(C_T)$, is the set of all prices $p^s\in \R^d$ satisfying~\eqref{eq:strictsell}. That is,
	\begin{align*}
	P^b(C_T)&: =\{p^b \in \R^d \st V(x_0 - p^b, C_T) \supseteq V(x_0, 0)\},\\
	P^s(C_T)&: =\{p^s \in \R^d \st V(x_0 + p^s, -C_T) \supseteq V(x_0, 0)\}.
	\end{align*} 
\end{definition}
\begin{remark}\label{rem:Pp=-Ps}
	By the definition, it is true that $P^b(C_T)= -P^s(-C_T)$. Hence, in Propositions~\ref{prop:lowerconvexset} and \ref{prop:closedness}, the statements are proven for the set-valued buy price $P^b(\cdot)$ only.  
\end{remark}

Note that the set-valued buy/sell prices defined above are not \emph{indifference} buy/sell prices. Indeed, for any element $p$ of $P^b(C_T)$/$P^s(C_T)$, it is \emph{better} for the decision maker to buy/sell the claim at that price. Hence, one may even call these, \emph{set-valued better to buy/sell} prices in order to emphasize this observation. For simplicity, we keep the names as they are.

Below we will show that $P^b(\cdot)$ and $P^s(\cdot)$ satisfy some properties which are in parallel to the properties of the scalar buy and sell prices under complete preferences{\footnote{Note also the relationship to the definition of the certainty equivalent, in particular between $\Cupp(C_T)$ and $P^s(C_T)$. A certain amount $c\in \bd \Cupp(C_T)= \Cw(C_T)$ is preferred to $C_T$ ($c \succsim C_T$), but for any $\eps\in\Int\R^d_+$, $c-\eps$ is not anymore preferred to it ($c-\eps\not\succsim C_T$). Similarly, for a price $p\in\bd P^s(C_T)$, the decision maker would prefer selling the claim at that price rather than not taking any action, but for any $\eps\in\Int\R^d_+$, $p-\eps$ is not anymore a sell price for him/her (see Propositions~\ref{prop:weakCE}-1. and~\ref{prop:priceboundBuy}-2. below). Moreover both sets are convex upper sets. The situation is somehow different when we consider $\Clow(C_T)$. This set is not necessarily convex (unlike $P^b(C_T)$). Moreover, for any $c\in\Clow(C_T)$, $C_T$ is preferred to $c$ (not the other way around), and for any $\eps\in\Int\R^d_+$, $C_T$ is not anymore preferred to $c+\eps$.}. 

First, we show that $P^b(C_T), P^s(C_T) \subseteq \R^d$ are lower, respectively upper, convex sets for any $C_T \in L^0(\mathcal{F}_T,\R^d)$. Furthermore, we show the monotonicity of both price functions as well as the concavity of $P^b(\cdot)$ and the convexity of $P^s(\cdot)$ in the sense of set-valued functions. The proof can be found in the Appendix.

\begin{proposition}\label{prop:lowerconvexset}
	Let Assumptions~\ref{assump:utility} and~\ref{assmp:A} a-c hold. 
	\begin{enumerate}
		\item For a claim $C_T \in L(\mathcal{F}_T, \R^d)$, $P^b(C_T)$ is a convex lower set and $P^s(C_T)$ is a convex upper set.
		\item $P^b(\cdot)$ and $P^s(\cdot)$ are increasing with respect to the partial order $\leq$, in the sense of set orders $\curlyeqprec$ and $\preccurlyeq$, respectively: For  $C_T^1, C_T^2 \in L(\mathcal{F}_T, \R^d)$, if $C_T^1 \leq C_T^2$, then $P^b(C_T^1) \curlyeqprec P^b(C_T^2)$ and $P^s(C_T^1) \preccurlyeq P^s(C_T^2)$.
		\item  $P^b(\cdot)$ is concave with respect to $\curlyeqprec$: For $C_T^1, C_T^2 \in L(\mathcal{F}_T, \R^d)$ and $\lambda \in [0,1]$
		\begin{equation} \label{eq:Pbconcave}
		\lambda P^b(C_T^1)+(1-\lambda)P^b(C_T^2) \curlyeqprec P^b(C_T^{\lambda})
		\end{equation}
		holds, where $C_T^{\lambda}:= \lambda C_T^1 + (1-\lambda)C_T^2$. Similarly, $P^s(\cdot)$ is convex with respect to $\preccurlyeq$.
	\end{enumerate}
\end{proposition}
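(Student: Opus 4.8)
The plan is to establish all three parts by reducing the set inclusion $V(x_0-p, C_T) \supseteq V(x_0, 0)$ to properties of the value function $V(\cdot,\cdot)$ collected in Remark~\ref{prop:V_properties}, together with the translativity in Assumption~\ref{assmp:A}~d.\ rephrased in Remark~\ref{prop:V_properties}, and the concavity-type property in Assumption~\ref{assmp:A}~b. Throughout I use the equivalence between the set-order formulation~\eqref{eq:buypreference} and the inclusion~\eqref{eq:strictbuy}, and I work only with $P^b$, deducing the statements for $P^s$ from $P^b(C_T) = -P^s(-C_T)$ (Remark~\ref{rem:Pp=-Ps}).

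\emph{Part 1.} First I would show $P^b(C_T)$ is a lower set. If $p \in P^b(C_T)$ and $p' \leq p$, then $x_0 - p' \geq x_0 - p$, so by Remark~\ref{prop:V_properties}~c.\ we get $V(x_0 - p', C_T) \supseteq V(x_0-p, C_T) \supseteq V(x_0,0)$, hence $p' \in P^b(C_T)$. For convexity, take $p^1, p^2 \in P^b(C_T)$ and $\lambda \in [0,1]$, and set $p^\lambda := \lambda p^1 + (1-\lambda)p^2$. I need $V(x_0 - p^\lambda, C_T) \supseteq V(x_0, 0)$. Here I would unfold the definition~\eqref{eq:valuefunction}: pick $V_T^i \in \mathcal{A}(x_0 - p^i)$ for $i=1,2$; then $\lambda V_T^1 + (1-\lambda)V_T^2 \in \lambda\mathcal{A}(x_0-p^1) + (1-\lambda)\mathcal{A}(x_0-p^2) \subseteq \mathcal{A}(x_0 - p^\lambda)$ by Assumption~\ref{assmp:A}~b., and since each $u\in\mathcal{U}$ is concave (Assumption~\ref{assump:utility}~b.), $U$ is $\R^q_+$-concave, so $U(\lambda(V_T^1+C_T) + (1-\lambda)(V_T^2+C_T)) \geq \lambda U(V_T^1+C_T) + (1-\lambda)U(V_T^2+C_T)$. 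This shows $\lambda V(x_0-p^1,C_T) + (1-\lambda)V(x_0-p^2,C_T) \subseteq V(x_0-p^\lambda, C_T)$ (using that these are closed lower sets and taking closures); since each summand contains $V(x_0,0)$ and $V(x_0,0)$ is a convex lower set containing $0$-translates appropriately, a convex combination of two supersets of the convex lower set $V(x_0,0)$ still contains $V(x_0,0)$, giving $p^\lambda \in P^b(C_T)$. I would spell out this last set-algebra step carefully, as it is where one could slip.

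\emph{Part 2.} Monotonicity is immediate: if $C_T^1 \leq C_T^2$ then by Remark~\ref{prop:V_properties}~a., $V(x_0 - p, C_T^1) \subseteq V(x_0 - p, C_T^2)$ for every $p$, so $p \in P^b(C_T^1)$ implies $V(x_0-p,C_T^2) \supseteq V(x_0-p,C_T^1) \supseteq V(x_0,0)$, i.e.\ $p \in P^b(C_T^2)$; thus $P^b(C_T^1) \subseteq P^b(C_T^2)$, which for lower sets is exactly $P^b(C_T^1) \curlyeqprec P^b(C_T^2)$ by the equivalences for closed lower sets recalled in Section~\ref{subsect:PrelimOrders}.

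\emph{Part 3.} For concavity, let $p^i \in P^b(C_T^i)$, $i=1,2$, and $p^\lambda := \lambda p^1 + (1-\lambda)p^2$, $C_T^\lambda := \lambda C_T^1 + (1-\lambda)C_T^2$; I must show $p^\lambda \in P^b(C_T^\lambda)$, which yields $\lambda P^b(C_T^1) + (1-\lambda)P^b(C_T^2) \subseteq P^b(C_T^\lambda)$, equivalently~\eqref{eq:Pbconcave} since these are closed lower sets. Given $V_T^i \in \mathcal{A}(x_0 - p^i)$, the combination $V_T^\lambda := \lambda V_T^1 + (1-\lambda)V_T^2$ lies in $\mathcal{A}(x_0 - p^\lambda)$ by Assumption~\ref{assmp:A}~b., and $V_T^\lambda + C_T^\lambda = \lambda(V_T^1 + C_T^1) + (1-\lambda)(V_T^2 + C_T^2)$, so by $\R^q_+$-concavity of $U$, $U(V_T^\lambda + C_T^\lambda) \geq \lambda U(V_T^1+C_T^1) + (1-\lambda)U(V_T^2+C_T^2)$. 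Hence $\lambda V(x_0-p^1, C_T^1) + (1-\lambda)V(x_0-p^2,C_T^2) \subseteq V(x_0-p^\lambda, C_T^\lambda)$; since each of the two summands contains $V(x_0,0)$, and $V(x_0,0)$ is convex, the Minkowski combination contains $\lambda V(x_0,0) + (1-\lambda)V(x_0,0) = V(x_0,0)$, so $p^\lambda \in P^b(C_T^\lambda)$. The convexity of $P^s(\cdot)$ with respect to $\preccurlyeq$ then follows from $P^s(C_T) = -P^b(-C_T)$ by negating.

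\emph{Main obstacle.} The delicate point in both Part~1 and Part~3 is the handling of the closure in the definition~\eqref{eq:valuefunction} and the passage from ``every pair of feasible points combines'' to the genuine set inclusion $\lambda V(\cdots) + (1-\lambda)V(\cdots) \subseteq V(\cdots)$: one must check that the Minkowski sum of closures is contained in the closure of the relevant union, which follows because $\lambda A + (1-\lambda)B \subseteq \overline{\lambda A_0 + (1-\lambda)B_0}$ whenever $A = \overline{A_0}$, $B = \overline{B_0}$ and $\lambda A_0 + (1-\lambda)B_0$ is contained in the target set before taking closure — a standard but easy-to-botch argument. The final ``$\lambda V(x_0,0) + (1-\lambda) V(x_0,0) = V(x_0,0)$'' step uses precisely that $V(x_0,0)$ is convex and a lower set (so it absorbs $-\R^q_+$), which is why Assumption~\ref{assmp:A}~a.\ and the concavity of the $u^i$ are needed in the hypotheses.
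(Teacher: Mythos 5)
Your proposal is correct and follows essentially the same route as the paper's proof: the lower-set property via the monotonicity of $\mathcal{A}$ (Assumption~\ref{assmp:A}~c.), convexity and concavity via the superadditivity of $\mathcal{A}$ (Assumption~\ref{assmp:A}~b.) combined with the $\R^q_+$-concavity of $U$, and monotonicity via Remark~\ref{prop:V_properties}~a., with $P^s$ handled through $P^b(C_T)=-P^s(-C_T)$. Your explicit treatment of the closure in the Minkowski-sum inclusion is a point the paper leaves implicit, and note that the final step needs only the trivial identity $v=\lambda v+(1-\lambda)v$ rather than convexity of $V(x_0,0)$, and that $\subseteq$ implies $\curlyeqprec$ already for lower sets without closedness (which is only established later in the paper).
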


The properties proven in Proposition~\ref{prop:lowerconvexset} simplify further whenever $d = 1$. First, note that $P^b(C_T)$ and $P^s(C_T)$ are then intervals by Proposition~\ref{prop:lowerconvexset}. Moreover, if the preference relation is complete and a von Neumann and Morgenstern utility representation is given by $u:\R\rightarrow \R \cup \{-\infty\}$, then one recovers the usual definition and the properties of the indifference prices. Indeed, $P^b(C_T), P^s(C_T)$ simplify to $P^b, P^s$ given by~\eqref{eq:PbPs_scalar}. Then, $\sup P^b(C_T)=\bd P^b(C_T)$ is the classical utility indifference buy price and {$\inf P^s(C_T) = \bd P^s(C_T)$} is the classical utility indifference sell price. In this case, assertions 2. and~3. of Proposition~\ref{prop:lowerconvexset} simply recover the monotonicity and concavity (convexity) of the utility indifference buy (sell) price.

By the following propositions, proofs of which can be found in the Appendix, we show that under some additional assumptions on the market model, namely Assumptions~\ref{assmp:A} d. to e., buy and sell prices are closed sets and the intersection of buy and sell prices has no interior. Then, we define indifference price bounds as the boundaries of the set-valued buy and sell prices, namely $\bd P^b(C_T)$ and $\bd P^s(C_T)$.

\begin{proposition}\label{prop:intersectionPbPs}
	Let Assumptions~\ref{assump:utility} and~\ref{assmp:A} a-d hold. Then, {for any $C_T\in L^0(\mathcal{F}_T,\R^d)$, the followings hold
	\begin{enumerate}
		\item If $p\in P^b(C_T)\cap P^s(C_T)$, then $V(x_0-p,C_T) = V(x_0,0) = V(x_0+p,-C_T)$; 
		\item $\Int (P^b(C_T)\cap P^s(C_T)) = \emptyset$.
	\end{enumerate}}
\end{proposition}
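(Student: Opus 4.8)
The plan is to prove the two assertions in order, using the monotonicity and strict monotonicity properties of the value function $V(\cdot,\cdot)$ collected in Remark~\ref{prop:V_properties} together with the set-inclusion characterization of the buy and sell prices in Definition~\ref{defn:strictprices}.

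\emph{Assertion 1.} Suppose $p\in P^b(C_T)\cap P^s(C_T)$. By definition, $V(x_0-p,C_T)\supseteq V(x_0,0)$ and $V(x_0+p,-C_T)\supseteq V(x_0,0)$. The idea is to use Assumption~\ref{assmp:A} b. (concavity of $\mathcal{A}$ under Minkowski operations) and Assumption~\ref{assump:utility} b. (monotonicity of $V(x,\cdot)$ in the claim) to run an averaging argument: taking the midpoint initial endowment $\tfrac12(x_0-p)+\tfrac12(x_0+p)=x_0$ and the midpoint claim $\tfrac12 C_T + \tfrac12(-C_T)=0$, one obtains $\tfrac12 V(x_0-p,C_T)+\tfrac12 V(x_0+p,-C_T)\subseteq V(x_0,0)$ (this is the same set-convexity computation that underlies the concavity statement in Proposition~\ref{prop:lowerconvexset}-3., specialized to constant claims/prices). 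Since each of the two sets on the left already contains $V(x_0,0)$, a lower set, one can add $V(x_0,0)$ to each summand and use $V(x_0,0)+V(x_0,0)-\R^q_+ \supseteq 2V(x_0,0)$ type reasoning — more carefully: from $V(x_0,0)\subseteq V(x_0-p,C_T)$ and $V(x_0,0)\subseteq V(x_0+p,-C_T)$ together with the reverse inclusion obtained from the averaging argument, one concludes $V(x_0-p,C_T)=V(x_0,0)=V(x_0+p,-C_T)$. The cleanest way is probably: the averaging inclusion gives $V(x_0-p,C_T)\subseteq V(x_0,0)$ after noting that $\tfrac12 A+\tfrac12 A = A$ for a convex set $A$ and that replacing one summand by the larger set $V(x_0-p,C_T)\supseteq V(x_0,0)$ only enlarges the Minkowski sum while the other summand $V(x_0+p,-C_T)\supseteq V(x_0,0)$ can be shrunk back to $V(x_0,0)$; I would write this sequence of inclusions explicitly. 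Combined with the two hypothesized inclusions, equality of all three sets follows.

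\emph{Assertion 2.} Suppose for contradiction that $P^b(C_T)\cap P^s(C_T)$ has nonempty interior, so there is $p$ with $p+\delta e$ in the intersection for some $\delta>0$, where $e$ is the vector of ones. Apply Assertion 1 to the price $p+\delta e$: $V(x_0-(p+\delta e),C_T)=V(x_0,0)$. Also apply Assertion 1 to $p$ (which lies in the intersection too, since the intersection is convex and contains a neighborhood of $p$... actually $p$ itself is in the interior): $V(x_0-p,C_T)=V(x_0,0)$. But $x_0-(p+\delta e) < x_0-p$, so by Remark~\ref{prop:V_properties} d. (strict monotonicity of $V(\cdot,C_T)$ in the endowment, using Assumption~\ref{assump:utility} c. and Assumption~\ref{assmp:A} d.), we have $V(x_0-(p+\delta e),C_T)\subsetneq V(x_0-p,C_T)$, contradicting that both equal $V(x_0,0)$.

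\textbf{Main obstacle.} The delicate point is Assertion 1: making the set-convexity/averaging inclusion rigorous in the correct direction. The inclusion $\lambda\mathcal{A}(x)+(1-\lambda)\mathcal{A}(y)\subseteq\mathcal{A}(\lambda x+(1-\lambda)y)$ together with $K$-convexity (here concavity, with $\R^q_+$) of $U$ gives $\lambda V(x,C_T^1)+(1-\lambda)V(y,C_T^2)\subseteq V(\lambda x+(1-\lambda)y, \lambda C_T^1+(1-\lambda)C_T^2)$ up to closures; one must check the closure does not cause trouble and that the specialization $x=x_0-p$, $y=x_0+p$, $C_T^1=C_T$, $C_T^2=-C_T$, $\lambda=\tfrac12$ indeed yields $\subseteq V(x_0,0)$, and then combine it correctly with the two hypothesized reverse inclusions to squeeze out equality. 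Once Assertion 1 is in hand, Assertion 2 is a short strict-monotonicity argument exactly paralleling the proof of Proposition~\ref{prop:CEnointerior}.
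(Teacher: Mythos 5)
Your proposal follows essentially the same route as the paper: the midpoint/averaging inclusion $\tfrac12 V(x_0-p,C_T)+\tfrac12 V(x_0+p,-C_T)\subseteq V(x_0,0)$ (obtained from concavity of $U$ and Assumption~\ref{assmp:A}) for Assertion~1, and the strict monotonicity of $V(\cdot,C_T)$ in the endowment (Remark~\ref{prop:V_properties}~d.) for Assertion~2, which is exactly how the paper argues. The one point to tighten is the step you yourself flag as delicate: the manipulation you sketch --- shrinking one summand to $V(x_0,0)$ to get $\tfrac12 V(x_0-p,C_T)+\tfrac12 V(x_0,0)\subseteq V(x_0,0)$ and then invoking $\tfrac12 A+\tfrac12 A=A$ --- does not by itself yield $V(x_0-p,C_T)\subseteq V(x_0,0)$, because the identity $\tfrac12 A+\tfrac12 A=A$ only gives inclusions in the unhelpful direction (a generic $a\in A$ need not decompose as $\tfrac12 a_1+\tfrac12 c$ with $a_1\in A$, $c\in C$, since these are lower sets). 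The standard way to close it, which is what the paper's terse ``holds only if'' is hiding, is an iteration: for $a\in V(x_0-p,C_T)$ and a fixed $c\in V(x_0,0)$, the inclusion gives $\tfrac12 a+\tfrac12 c\in V(x_0,0)$, and feeding this back in yields $(1-2^{-n})a+2^{-n}c\in V(x_0,0)$ for all $n$; letting $n\to\infty$ and using closedness of $V(x_0,0)$ gives $a\in V(x_0,0)$. With that substitution your argument is complete and matches the paper's; your Assertion~2 is identical to the paper's contradiction via $p-\eps$.
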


\begin{proposition} \label{prop:closedness}
	Let Assumptions~\ref{assump:utility} and~\ref{assmp:A} hold. For a claim $C_T \in L(\mathcal{F}_T, \R^d)$, the set-valued buy and sell prices $P^b(C_T)$ and $P^s(C_T)$ are closed subsets of $\R^d$.
\end{proposition}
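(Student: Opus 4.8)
The plan is to show that $P^b(C_T)$ is closed, and then invoke Remark~\ref{rem:Pp=-Ps} (which gives $P^s(C_T) = -P^b(-C_T)$) to conclude closedness of $P^s(C_T)$ as well, since reflection through the origin is a homeomorphism of $\R^d$. So the whole argument reduces to showing that the set
\[
P^b(C_T) = \{p \in \R^d \st V(x_0-p,C_T) \supseteq V(x_0,0)\}
\]
is closed. By Proposition~\ref{prop:lowerconvexset}-1., $P^b(C_T)$ is a convex lower set, so $P^b(C_T) = P^b(C_T) - \R^d_+$; closedness of such a set will follow once we check sequential closedness along a well-chosen sequence. First I would take a sequence $(p^k)_{k\geq 1} \subseteq P^b(C_T)$ with $p^k \to p$. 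Because $P^b(C_T)$ is a lower set, without loss of generality (replacing $p^k$ by something componentwise smaller if needed, or by passing to a subsequence) I can arrange that $(p^k)$ is a decreasing sequence with respect to $\leq$ converging to $p$ — this is the natural move given that Assumption~\ref{assmp:A}~e. is phrased for decreasing sequences. Indeed, since $p^k \to p$, the vectors $\tilde p^k := p + (\sup_{j\geq k}\|p^j - p\|_\infty) e$ decrease to $p$, dominate $p^k$ componentwise, hence $x_0 - \tilde p^k \leq x_0 - p^k$, and by Remark~\ref{prop:V_properties}~c. we get $V(x_0 - \tilde p^k, C_T) \subseteq V(x_0 - p^k, C_T)$... wait, that inclusion goes the wrong way, so I actually want an \emph{increasing} sequence converging to $p$; let me instead use $\hat p^k := p - (\sup_{j \geq k}\|p^j-p\|_\infty)e$, which increases to $p$ and satisfies $\hat p^k \leq p^k$, hence $V(x_0 - \hat p^k, C_T) \supseteq V(x_0 - p^k, C_T) \supseteq V(x_0, 0)$, so $\hat p^k \in P^b(C_T)$. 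Then $x_0 - \hat p^k$ is a \emph{decreasing} sequence with limit $x_0 - p$, which is exactly the hypothesis of Assumption~\ref{assmp:A}~e.

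The key step is then: from $V(x_0 - \hat p^k, C_T) \supseteq V(x_0,0)$ for all $k$, deduce $V(x_0 - p, C_T) \supseteq V(x_0,0)$, i.e. $p \in P^b(C_T)$. Writing $x^k := x_0 - \hat p^k$ (decreasing, $\to x_0 - p$), I would show $\bigcap_{k\geq 1} V(x^k, C_T) = V(x_0 - p, C_T)$. The inclusion $\supseteq$ is immediate from monotonicity (Remark~\ref{prop:V_properties}~c., since $x_0 - p \leq x^k$). For $\subseteq$: unwinding the definition~\eqref{eq:valuefunction}, $V(x^k,C_T) = \cl\bigcup_{Z\in\mathcal A(x^k)}(U(Z+C_T)-\R^q_+)$, and Assumption~\ref{assmp:A}~e. gives $\mathcal A(x_0-p) = \bigcap_{k}\mathcal A(x^k)$, so the core task is to pass the intersection through the closure and the union — this is where continuity of $U$ (which follows from continuity of the $u^i$, Assumption~\ref{assump:utility}~b.) and the "tube"/monotone convergence structure come in. Concretely, if $y \in \bigcap_k V(x^k,C_T)$, then for each $k$ there is $Z^k \in \mathcal A(x^k)$ with $U(Z^k + C_T)$ approximately $\geq y$; since $\Omega$ is finite, $\mathcal A(x^k) \subseteq \mathcal A(x^1)$ and suitable coercivity/boundedness (inherited from boundedness of the utility maximization problem, guaranteeing the lower images are proper) should let me extract a convergent subsequence $Z^k \to Z^* \in \bigcap_k \mathcal A(x^k) = \mathcal A(x_0-p)$, and continuity of $U$ then gives $y \in V(x_0-p,C_T)$.

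The main obstacle I anticipate is precisely this last compactness/limit argument: extracting a limit point $Z^*$ of the $(Z^k)$ that lies in $\mathcal A(x_0-p)$ and letting the utility values pass to the limit. One has to be careful because the $Z^k$ a priori only satisfy $U(Z^k+C_T) - \R^q_+ \ni y$ up to an approximation (due to the closure in the definition of $V$), and because $\mathcal A(x^1)$ need not be bounded in general — so one genuinely needs to use that the problem $P(x,C_T)$ is bounded (the lower images are contained in $\{\bar y\} - \R^q_+$) together with the strict monotonicity in Assumption~\ref{assump:utility}~c. to confine the relevant $Z^k$ to a compact set before extracting a subsequence. An alternative, perhaps cleaner, route that avoids sequences of wealths altogether is to argue directly on the lower images: show $V(x_0-p, C_T) = \cl\bigcup_k V(x^k, C_T)$ using Assumption~\ref{assmp:A}~e. and the closedness of each $V(x^k,C_T)$, combined with the fact that $(V(x^k,C_T))_k$ is a \emph{nested increasing} family of closed lower sets with $V(x_0-p,C_T) \subseteq V(x^k, C_T)$; then one needs that the only closed lower set sandwiched correctly and compatible with $\mathcal A(x_0-p) = \bigcap \mathcal A(x^k)$ is $V(x_0-p,C_T)$ itself. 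I would try the direct approach first and fall back on the sequential one if the set-algebra does not close up cleanly. Either way, once $p \in P^b(C_T)$ is established the proof is complete, and $P^s$ follows for free by Remark~\ref{rem:Pp=-Ps}.
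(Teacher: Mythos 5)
Your skeleton coincides with the paper's: reduce to $P^b(C_T)$ via $P^s(C_T)=-P^b(-C_T)$ (Remark~\ref{rem:Pp=-Ps}), use the lower-set property from Proposition~\ref{prop:lowerconvexset} to replace an arbitrary convergent sequence in $P^b(C_T)$ by an increasing one $\hat p^k\nearrow p$, so that $x^k:=x_0-\hat p^k$ decreases to $x_0-p$ and Assumption~\ref{assmp:A}~e.\ yields $\mathcal{A}(x_0-p)=\bigcap_k\mathcal{A}(x^k)$, and then reduce everything to the inclusion $\bigcap_k V(x^k,C_T)\subseteq V(x_0-p,C_T)$. Up to that point your proposal matches the paper step for step. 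The genuine gap is in the step you yourself flag as the main obstacle: the extraction of a convergent subsequence of the witnesses $Z^k$. Boundedness of the utility maximization problem only says the lower image is contained in $\{\bar y\}-\R^q_+$ for some $\bar y\in\R^q$; it gives no control on the wealths that nearly attain a given utility level. The set $\{Z \st U(Z+C_T)\geq y-\eps\}$ is an unbounded upper set (take $u(x)=1-e^{-x}$ and let one component of $Z$ run to $+\infty$), finiteness of $\Omega$ only makes the ambient space finite-dimensional, and strict monotonicity of the $u^i$ pushes in the wrong direction. So the claimed limit point $Z^*\in\mathcal{A}(x_0-p)$ need not exist, and I do not see how to repair this route without coercivity assumptions the paper does not make.

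The paper closes the argument with no compactness at all. Writing $A_n:=\bigcup_{V_T\in\mathcal{A}(x_0-p^n)}\left(U(V_T+C_T)-\R^q_+\right)$, it observes that each $A_n$ is a lower set, so $y\in\cl A_n=V(x_0-p^n,C_T)$ forces $\{y\}-\Int\R^q_+\subseteq A_n$; hence any $y\in\bigcap_n V(x_0-p^n,C_T)$ can be approximated from strictly below by points of $\bigcap_n A_n$, giving $y\in\cl\bigcap_n A_n$, which is then identified with $V(x_0-p,C_T)$ via $\mathcal{A}(x_0-p)=\bigcap_n\mathcal{A}(x_0-p^n)$. This is precisely your ``alternative, cleaner route,'' except that you state it with the wrong set operations: since $x^k$ decreases, $\mathcal{A}(x^k)$ and hence $V(x^k,C_T)$ form a nested \emph{shrinking} family containing $V(x_0-p,C_T)$, so the relevant object is $\bigcap_k V(x^k,C_T)$, not $\cl\bigcup_k V(x^k,C_T)$, and the family is not ``increasing.'' If you correct that and carry out the lower-set/closure argument instead of the sequential one, you arrive at the paper's proof.
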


Now as set-valued buy and sell prices are closed convex lower, respectively upper sets that do not have a solid intersection, we define indifference price bounds as the boundaries of these set-valued prices. 

\begin{definition}\label{defn:indifpricebounds}  
	\emph{The indifference price bounds} for $C_T$ are $\bd P^b(C_T)$ and $\bd P^s(C_T)$.
\end{definition}

Note that the definition of the indifference price bounds are similar to the definitions of the strong and weak certainty equivalents in a way that they are boundaries of lower and upper closed sets, respectively. The following proposition, similar to Proposition~\ref{prop:weakCE} for strong and weak certainty equivalents, shows the motivation behind the definition for the indifference price bounds. 
\begin{proposition} \label{prop:priceboundBuy}
	Let Assumptions~\ref{assmp:A} and~\ref{assump:utility} hold. Let $p \in \R^d$. Then, 
	\begin{enumerate}
		\item $p \in \bd P^b(C_T)$ if and only if the followings hold:
		\begin{enumerate}[i.]
			\item $V(x_0-p,C_T) \supseteq V(x_0,0)$;
			\item For any $\eps \in \Int \R^d_+$ it is true that $V(x_0-p-\eps,C_T) \nsupseteq V(x_0,0)$; 
		\end{enumerate}
		\item $p \in \bd P^s(C_T)$ if and only if the followings hold:
		\begin{enumerate}[i.]
			\item $V(x_0+p,-C_T) \supseteq V(x_0,0)$;
			\item For any $\eps \in \Int \R^d_+$ it is true that $V(x_0+p-\eps,-C_T) \nsupseteq V(x_0,0)$. 
		\end{enumerate}
	\end{enumerate}
\end{proposition}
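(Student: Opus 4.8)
The plan is to reduce the proposition to the structural facts already established for $P^b(\cdot)$ and $P^s(\cdot)$, together with the description of boundary points of closed lower and upper sets recalled in Section~\ref{subsect:PrelimOrders}.

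First I would note that, under Assumptions~\ref{assmp:A} and~\ref{assump:utility}, Proposition~\ref{prop:lowerconvexset}-1. and Proposition~\ref{prop:closedness} together say that $P^b(C_T)$ is a closed convex lower set and $P^s(C_T)$ is a closed convex upper set, both with respect to $\R^d_+$. Consequently, by the identities $\wMax A = \bd A$ for a closed lower set $A$ and $\wMin A = \bd A$ for a closed upper set $A$ (Section~\ref{subsect:PrelimOrders}), we obtain $\bd P^b(C_T) = \wMax P^b(C_T)$ and $\bd P^s(C_T) = \wMin P^s(C_T)$.

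Next I would simply unfold these two characterizations. By definition of a weakly $\R^d_+$-maximal element, $p \in \wMax P^b(C_T)$ means that $p \in P^b(C_T)$ and that there is no $q \in P^b(C_T)$ with $q - p \in \Int\R^d_+$; writing $q = p + \eps$, the latter condition is exactly that $p + \eps \notin P^b(C_T)$ for every $\eps \in \Int\R^d_+$. Substituting Definition~\ref{defn:strictprices}, so that $p \in P^b(C_T) \iff V(x_0-p,C_T) \supseteq V(x_0,0)$ and $p+\eps \notin P^b(C_T) \iff V(x_0-p-\eps,C_T) \nsupseteq V(x_0,0)$, turns these two conditions into precisely (i) and (ii) of part~1. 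Part~2 follows in the same way from $\bd P^s(C_T) = \wMin P^s(C_T)$ after unfolding the definition of $P^s$, or, more quickly, from part~1 applied to the claim $-C_T$ together with the identity $P^s(C_T) = -P^b(-C_T)$ of Remark~\ref{rem:Pp=-Ps} and the fact that $\bd(-A) = -\bd A$.

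I do not expect a genuine obstacle here: the mathematical substance sits in Propositions~\ref{prop:lowerconvexset} and~\ref{prop:closedness}, and what remains is a translation of ``weakly maximal/minimal point of a closed lower/upper set'' into the defining inclusions. The only care needed is to make sure the full Assumption~\ref{assmp:A} is invoked, since closedness of $P^b(C_T)$ and $P^s(C_T)$ (Proposition~\ref{prop:closedness}) uses parts d.\ and e.\ as well, not just a.--c.
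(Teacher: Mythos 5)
Your proposal is correct and follows essentially the same route as the paper: both reduce the statement to Propositions~\ref{prop:lowerconvexset} and~\ref{prop:closedness}, use $\bd P^b(C_T) = \wMax P^b(C_T)$ and $\bd P^s(C_T) = \wMin P^s(C_T)$ for closed lower/upper sets, and then unfold the definitions of weakly maximal/minimal elements together with Definition~\ref{defn:strictprices}. Your version merely spells out the unfolding (and the reduction of part 2 to part 1 via Remark~\ref{rem:Pp=-Ps}) in more detail than the paper does.
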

\begin{proof}
	By Propositions~\ref{prop:lowerconvexset} and~\ref{prop:closedness} we know that $P^b(C_T)$ is a lower closed set and $P^s(C_T)$ is an upper closed set. Hence, $\wMax P^b(C_T) = \bd P^b(C_T)$ and $ \wMin P^s(C_T) = \bd P^s(C_T)$. The the assertion follows from the definitions of weakly maximal and weakly minimal elements. 
\end{proof}

Note that for any $p \in \bd P^b(C_T)$ it holds $V(x_0-p,C_T) \supseteq V(x_0,0)$, that is, buying the claim at $p$ is at least as preferred as not buying it. Moreover, by Remark~\ref{rem:characterization}, if $\mathcal{A}$ is closed and the utility maximization problem is bounded,  $V(x_0-p-\epsilon,C_T) \nsupseteq V(x_0,0)$ implies that there exists $w \in \R^q_+$ such that the maximum expected weighted utility is strictly less if one buys the claim at $p+\epsilon$, that is, $$\sup_{V_T \in \mathcal{A}(x_0-p-\epsilon)}w^TU(V_T+C_T) < \sup_{V_T \in \mathcal{A}(x_0)}w^TU(V_T).$$
Similarly, for any $p \in \bd P^s(C_T)$, selling the claim at $p$ is at least as preferred as not selling it. However, for any $\epsilon \in \Int \R^q_+$, there exists $w \in \R^q_+$ such that the maximum expected weighted utility is strictly less if one sells the claim at $p-\epsilon$, that is, $$\sup_{V_T \in \mathcal{A}(x_0+p-\epsilon)}w^TU(V_T-C_T) < \sup_{V_T \in \mathcal{A}(x_0)}w^TU(V_T).$$

{With the next proposition, we show that under some further assumptions on $u\in\mathcal{U}$ and $\mathcal{A}(\cdot)$, for any $p \in \bd P^b(C_T)$, there exists a weight vector $w\in\R^q_+$ such that paying $p$ to receive $C_T$ and paying nothing and not having $C_T$ have the same maximum expected weighted utility $w^TU$. The proof can be found in the Appendix.} 
{\begin{proposition}\label{prop:uniformcont}
		If each $u\in\mathcal{U}$ is uniformly continuous and $\mathcal{A}(x)=x+\mathcal{A}(0)$ for all $x\in \R^d$, then $V(x_0,0)\nsubseteq \Int V(x_0-p,C_T)$ for any $p\in\bd P^b(C_T)$. Similarly, $V(x_0,0)\nsubseteq \Int V(x_0+p,-C_T)$ for any $p\in\bd P^s(C_T)$. 
\end{proposition}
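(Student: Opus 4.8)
The plan is to argue by contradiction: suppose $p\in\bd P^b(C_T)$ but $V(x_0,0)\subseteq\Int V(x_0-p,C_T)$, and derive that $p$ is in fact an interior point of $P^b(C_T)$, contradicting $p\in\bd P^b(C_T)$. The idea is that if the inclusion $V(x_0-p,C_T)\supseteq V(x_0,0)$ holds with the right-hand side buried strictly inside the interior, then there is ``room'' to decrease $p$ slightly in every coordinate and still keep the inclusion. Here is where the hypotheses $\mathcal{A}(x)=x+\mathcal{A}(0)$ and uniform continuity of each $u\in\mathcal{U}$ enter: the former gives the clean translation identity $V(x-\eps,C_T)=\cl\bigcup_{V_T\in\mathcal{A}(0)}(U(V_T+x-\eps+C_T)-\R^q_+)$, so that shrinking $p$ by $\eps\in\Int\R^d_+$ amounts to translating the argument of $U$ by $-\eps$; the latter gives that $U(V_T+x-\eps+C_T)\to U(V_T+x+C_T)$ uniformly in $V_T$ as $\eps\to 0$, which is exactly what is needed to control how much the whole set $V(x_0-p-\eps,C_T)$ moves.

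The key steps, in order, are as follows. First, I would fix a point $y\in\bd V(x_0,0)=\wMax V(x_0,0)$ (the boundary is nonempty and, being the weakly maximal set of a closed lower set, equals $\bd V(x_0,0)$). By assumption $y\in\Int V(x_0-p,C_T)$, so there is $\delta>0$ with $y+\delta e\in V(x_0-p,C_T)$, where $e$ is the vector of ones in $\R^q$. Second, I would use uniform continuity: there is $\eta>0$ such that $\|z-z'\|\le\eta$ implies $|u^i(z)-u^i(z')|\le\delta/2$ for all $i$, hence $\|U(z)-U(z')\|_\infty\le\delta/2$; translating by $\eps=\tfrac{\eta}{d}\,e_{\R^d}$ (small enough, with $e_{\R^d}$ the vector of ones in $\R^d$) moves every $U(V_T+x_0-p+C_T)$ by at most $\delta/2$ in each coordinate, so using $\mathcal{A}(x_0-p-\eps)=\mathcal{A}(x_0-p)-\eps$ one gets $V(x_0-p-\eps,C_T)\supseteq V(x_0-p,C_T)-\tfrac{\delta}{2}e$. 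Third, combining, for such $\eps$ every point of $V(x_0,0)$ within a $\delta/2$-box of $y$ still lies in $V(x_0-p-\eps,C_T)$; doing this uniformly over the (compact, after intersecting with a bounded box) maximal frontier and using that $V(x_0,0)$ is a lower set, I would conclude $V(x_0-p-\eps,C_T)\supseteq V(x_0,0)$, i.e. $p+\eps\in P^b(C_T)$ for all small $\eps\in\Int\R^d_+$. Together with monotonicity of $P^b$ (Proposition~\ref{prop:lowerconvexset}-2., which gives $p-\eps'\in P^b(C_T)$ too since $P^b(C_T)$ is a lower set and $p\in P^b(C_T)$), this places a full neighborhood of $p$ inside $P^b(C_T)$, contradicting $p\in\bd P^b(C_T)$.

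The main obstacle I anticipate is making the ``uniform over the frontier'' argument rigorous without compactness a priori: the lower image $V(x_0,0)$ is unbounded (it is a lower set), so one cannot simply invoke compactness of its boundary. The clean way around this is to reduce to a single witness point. Since $V(x_0,0)=\cl\bigcup_{V_T\in\mathcal{A}(0)}(U(V_T+x_0)-\R^q_+)$ and $P^b$-membership $V(x_0-p-\eps,C_T)\supseteq V(x_0,0)$ is equivalent (both sides being closed lower sets) to $U(V_T+x_0)\in V(x_0-p-\eps,C_T)$ for every $V_T\in\mathcal{A}(0)$, it suffices to show, for each fixed $V_T\in\mathcal{A}(0)$, that $U(V_T+x_0)-\tfrac{\delta}{2}e$-type perturbations remain captured; and the containment $V(x_0-p,C_T)\supseteq V(x_0,0)$ already tells us $U(V_T+x_0)\in V(x_0-p,C_T)$, while the strict-interior hypothesis upgrades this to $U(V_T+x_0)+\delta e\in V(x_0-p,C_T)$ for a \emph{uniform} $\delta$ (independent of $V_T$), because $\Int V(x_0-p,C_T)$ contains the entire set $V(x_0,0)$, hence contains an $\R^q_+$-translate of it by a fixed $\delta e$. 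From there the uniform-continuity translation estimate closes the gap pointwise in $V_T$, and no compactness is needed. The symmetric statement for $P^s$ follows verbatim by replacing $C_T$ with $-C_T$ and $x_0-p$ with $x_0+p$, using $P^s(C_T)=-P^b(-C_T)$ (Remark~\ref{rem:Pp=-Ps}).
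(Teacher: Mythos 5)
Your proposal is correct and follows essentially the same route as the paper's proof: argue by contradiction, use the translation property $\mathcal{A}(x)=x+\mathcal{A}(0)$ together with uniform continuity of the $u\in\mathcal{U}$ to show that a small componentwise increase of the price lowers the lower image by at most a small uniform amount, and conclude that $p$ would then lie in the interior of $P^b(C_T)$, contradicting $p\in\bd P^b(C_T)$. The one delicate point --- extracting a \emph{uniform} gap ($V(x_0-p,C_T)\supseteq V(x_0,0)+\delta e$, respectively $V(x_0,0)+B(0,\epsilon)$ in the paper) from the pointwise hypothesis $V(x_0,0)\subseteq\Int V(x_0-p,C_T)$ --- is asserted with essentially the same brief justification in your write-up as in the paper, so you are on the same footing there.
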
} 
{Proposition~\ref{prop:uniformcont} shows that the boundaries of $V(x_0,0)$ and $V(x_0-p,C_T)$ intersect, hence for $p\in\bd P^b(C_T)$ there exists $w \in\R^q_+$ such that 
	\begin{equation}\label{eq:weigtedUbuy}
	\sup_{V_T\in\mathcal{A}(x_0-p)}w^TU(V_T+C_T) = \sup_{V_T\in\mathcal{A}(x_0)}w^TU(V_T).
	\end{equation} 
	Similarly, if $p\in\bd P^s(C_T)$, then there exists $w\in\R^q_+$ such that
	\begin{equation*}\label{eq:weigtedUsell}
	\sup_{V_T\in\mathcal{A}(x_0+p)}w^TU(V_T-C_T) = \sup_{V_T\in\mathcal{A}(x_0)}w^TU(V_T).
	\end{equation*}}

{Note that the market models explained in Section~\ref{sect:univariatecase} and Section~\ref{sect:Conical} satisfy the assumption $\mathcal{A}(x)=x+\mathcal{A}(0)$ for all $x\in \R^d$. Moreover, the utility functions that are considered in Example~\ref{ex:comp2dim} are uniformly continuous.}

{\begin{remark}\label{rem:Defn1}
		In Definition~\ref{defn:indifpricebounds}, the boundaries of the sets $P^b(C_T)$ and $P^s(C_T)$ are called the indifference price bounds for $C_T$. Note that different from the scalar case, for example, the buyer of claim $C_T$ is not really \emph{indifferent} between `paying nothing and not having $C_T$' and `paying $p^b \in \bd P^b(C_T)$ to receive $C_T$'. However, in the special case of a complete preference relation with $d = 1$, these sets reduce to the usual indifference prices. Moreover, when restricted to the special case of a complete preference relation with $d>1$ under the conical market model, these sets contain the indifference prices as defined in~\cite{benedetti2012}, see Section~\ref{subsect:complete}.\\
		Furthermore, for the general case, by Proposition~\ref{prop:uniformcont} and \eqref{eq:weigtedUbuy}, we observe that if $p\in\bd P^b(C_T)$, then a decision maker with a complete preference relation which admits a particular weighted sum of the vector valued utility, $w^TU$, as its representation, would be indifferent between the two options.\\
		In economic terms, the sets $P^b(C_T)$ and $P^s(C_T)$ can be seen as the willingness to pay and then the boundaries would be the reservation price or sell/buy price, which is called the indifference price in Finance. Thus, we decided to still call $\bd P^b(C_T)$ and $\bd P^s(C_T)$ the indifference price bounds in analogy to the scalar case, knowing that it does in general not mean being indifferent as in the classical sense, but more in the sense of Proposition~\ref{prop:uniformcont}.
\end{remark}}

\begin{remark}\label{rem:SHP}
	In~\cite{SHP}, L\"ohne and Rudloff study the set of all superhedging portfolios for num\'eraire free markets with transactions costs and provide an algorithm to compute it. Accordingly, for a claim $C_T$, the set of superhedging portfolios is given by $$\SHP(C_T):=\{p\in \R^d \st C_T \in \mathcal{A}(p)\}$$ and the set of all subhedging portfolios for $C_T$ is $$\SubHP(C_T) := -\SHP(-C_T).$$ 
	Note that for $d=1$, these sets would be intervals leading to the usual no-arbitrage pricing interval given by $(\sup \SubHP(C_T), \inf \SHP(C_T)).$
	
	If $\mathcal{A}(0)+\mathcal{A}(0)\subseteq \mathcal{A}(0)$, which is the case for the conical market model also considered in~\cite{SHP}, we have 
	\begin{align}\label{superhedge}
	P^s(C_T) \supseteq \SHP(C_T) \text{~~and~~} P^b(C_T) \supseteq \SubHP(C_T).
	\end{align}
	Indeed, for $p\in \SHP(C_T)$, we have $C_T\in\mathcal{A}(p)$. By Assumption~\ref{assmp:A} d, and $\mathcal{A}(0)+\mathcal{A}(0)\subseteq \mathcal{A}(0)$, $V_T+C_T\in\mathcal{A}(x_0+p)$ for any $V_T\in\mathcal{A}(x_0)$. This implies $V(x_0,0)\subseteq V(p,-C_T)$. To see that, let $U(V_T)-r \in V(x_0,0)$ for some $V_T\in\mathcal{A}(x_0)$, $r\in\R^q_+$. Note that $U(V_T)-r=U(V_T+C_T-C_T)-r \in V(p,-C_T)$ as $V_T+C_T \in\mathcal{A}(x_0+p)$. The second inclusion can be shown symmetrically.  
	
	It is well known that in incomplete financial markets, superhedging can be quite expensive and thus the interval or set of no-arbitrage prices can be quite big. Indifference pricing leads then to smaller price intervals. Equation~\eqref{superhedge} confirms that this is also the case when incomplete preference relations are considered. In Examples~\ref{ex:1}, \ref{ex:multivarcomplete} and~\ref{ex:comp2dim}, the utility indifference price bounds and for comparison also the super- and subhedging price bounds will be computed to illustrate the relationship given in~\eqref{superhedge}.
\end{remark}

\section{Computing the Certainty Equivalent and Indifference Price Bounds}\label{sect:computations}
Before considering different market models and solving numerical examples in Section~\ref{sect:appl}, we now discuss the computations of the set-valued quantities introduced in Sections~\ref{sect:CE} and \ref{sect:indifference}. Note that the computations are related to solving CVOPs and {we will show some simplifications} for some special cases. First, we discuss computing the certainty equivalent and then approximations to the indifference price bounds. Numerical examples will be given in Section~\ref{sect:appl}, see Examples~\ref{ex:1}, ~\ref{ex:multivarcomplete}, and~\ref{ex:comp2dim}.

\subsection{Computing $\Cupp(Z)$ and $\Clow(Z)$}
\label{subsect:cCEcomputation}
The computations of $\Cupp(Z)$ and $\Clow(Z)$ for $d=1$ are already given by Remark~\ref{rem:CE_d=1}. Here, we focus on the $d>1$ case only. As stated in Remark~\ref{rem:CupClow}, $\Cupp(Z)$ is a closed upper set. Indeed, using the representation given in~\eqref{eq:CupClowCeq}, it is easy to see that $\Cupp(Z)$ is the upper image of the following convex vector optimization problem with $r$ constraints: 
\begin{align}\begin{split} \label{eq:Cup_cvop}
&\text{minimize~~~~~} c \\ 
&\text{subject to~~~} \sup_{Q\in\mathcal{Q}}\E_Qu(Z)-u(c) \leq 0 \text{~~~ for all~} u\in \mathcal{U}. 
\end{split}
\end{align}

On the other hand, even though it is known by Remark~\ref{rem:CupClow} that $\Clow(Z)$ is a closed lower set, computing $\Clow(Z)$ requires more computational effort than computing $\Cupp(Z)$, in general. One can show that $\Clow(Z)$ is the lower image of the following vector optimization problem
\begin{align}\begin{split} \label{eq:Clow_noncvop}
&\text{maximize~~~~~} c \\
&\text{subject to~~~} \inf_{Q\in\mathcal{Q}}\E_Qu(Z)-u(c) \geq 0 \text{~~~ for all~} u\in \mathcal{U}. 
\end{split}
\end{align}
This problem is non-convex if the utility functions are not linear. There are algorithms that approximately solve non-convex vector optimization problems, see~\cite{nonconvex}. Instead of solving one non-convex VOP, one can also solve $r$ convex vector optimization problems in order to generate $\Clow(Z)$. Note that by the continuity of $u\in\mathcal{U}$, we have
\begin{align*}
\cl(\R^d\setminus\Clow(Z)) &= \bigcup_{u\in\mathcal{U}}\{c\in\R^d \st u(c) \geq \inf_{Q\in\mathcal{Q}}\E_Q u(Z)\},
\end{align*}
and each set $\{c\in\R^d \st u(c) \geq \inf_{Q\in\mathcal{Q}}\E_Q u(Z)\}$ is the upper image of the following vector optimization problem 
\begin{align}\begin{split} \label{eq:Clow_cvop}
&\text{minimize~~~~~} c \\
&\text{subject to~~~} \inf_{Q\in\mathcal{Q}}\E_Q u(Z)-u(c) \leq 0.
\end{split}
\end{align}
Then, one needs to solve $r$ convex vector optimization problems (one for each $u \in\mathcal{U}$), and the union of the upper images over all $u\in\mathcal{U}$ yields $\cl(\R^d\setminus\Clow(Z))$. 

Note that if the preference relation admits a multi-prior expected single utility representation, that is, if $r = 1$, then clearly it is enough to solve a single CVOP to compute $\Clow(Z)$. 

\begin{remark}\label{rem:CEcompletemulvarU}
	If the preference relation admits a single-prior expected-single utility representation where $\mathcal{U}=\{u\},\mathcal{Q}=\{Q\}$ and $d >1$, see also Remark~\ref{rem:CE_mpsu}, then, $\Cupp(Z)=\cl(\R^d\setminus\Clow(Z))$ and $C(Z)$ is the boundary of the upper image of the following convex vector optimization problem
	\begin{align*}
	&\text{minimize~~~~~} c \\
	&\text{subject to~~~} \E u(Z)-u(c) \leq 0. 
	\end{align*}
\end{remark}

\subsection{Computations of the Buy and Sell Price Bounds}
\label{subsubsect:strictcompute}
It is known by Proposition~\ref{prop:lowerconvexset} that set-valued buy and sell prices are lower, respectively upper {closed} convex sets. The main idea is that these sets can be seen as lower, respectively upper images of a certain convex vector optimization problem. Then, the aim is to solve these CVOPs' in order to find inner and outer approximations to the set-valued buy and sell prices. 

The first step is to solve the utility maximization problem~\eqref{(U)} for $C_T=0$ and $x=x_0$ using a CVOP algorithm to obtain an inner and an outer approximation to the lower image $V(x_0,0)$ of problem~\eqref{(U)} as defined in~\eqref{eq:valuefunction}. Note that for bounded problems, the primal as well as the dual algorithm provided in~\cite{cvop} yields a finite weak $\epsilon$-solution $\bar{\mathcal{X}}= \{{X}^1,\ldots,{X}^l\}\subseteq \mathcal{A}(x_0)$ of~$P(x_0,0)$ defined in \eqref{(U)}  in the sense of Definition~\ref{defn:solnCVOP}. Hence, it is true that \begin{equation}\label{eq:innerouterappofupper}
\conv U(\bar{\mathcal{X}}) - \R^q_+ \subseteq V(x_0,0) \subseteq \conv U(\bar{\mathcal{X}}) - \R^q_+ + \epsilon k,
\end{equation}
where $\epsilon > 0$ is the approximation error bound and $k \in \Int\R^q_+$ is fixed.

Moreover, by the structure of these algorithms, ${X}^i \in \bar{\mathcal{X}}$ is an optimal solution of the weighted sum scalarization problem for some ${w}^i \in \R^d_+$, that is, $$({w}^i)^TU({X}^i) = \max_{V_T\in \mathcal{A}(x_0)} ({w}^i)^TU(V_T) =: v^{w^i}.$$ The algorithms in~\cite{cvop} also provide these weight vectors ${w}^i \in \R^d_+$ for ${X}^i \in \bar{\mathcal{X}}$, see also Remark~\ref{rem:innerouterappr}. Let the finite set of weight vectors provided by the algorithm be ${W} :=\{{w}^1,\ldots,{w}^l\}$. 
In the following two sections, we provide methods to compute a superset and a subset of $P^b(C_T)$ and $P^s(C_T)$ using such  a finite weak $\epsilon$-solution $\bar{\mathcal{X}}$ as well as the finite set of weight vectors ${W}$.

\subsubsection{Computing a Superset of $P^b(C_T)$ and $P^s(C_T)$} \label{subsubsect:outerapp}
If $\mathcal{A}(\cdot)$ is a closed set, then by Remark~\ref{rem:characterization}, the set of all buy prices for a claim $C_T \in L(\mathcal{F}_T,\R^d)$ can be written as
\begin{align}
\begin{split}\label{eq:Pb_exactCVOP}
P^b(C_T) &= \{p \in \R^d \st V(x_0-p,C_T)\supseteq V(x_0,0)\} \\
&= \{p \in \R^d \st \forall w \in \R^q_+: \sup_{V_T\in \mathcal{A}(x_0-p)} w^TU(V_T+C_T)\geq v^w\},
\end{split}
\end{align}
where $v^w=\sup_{V_T\in \mathcal{A}(x_0)} w^TU(V_T)$.

Note that finding the values $v^w$ for all $w\in \R^q_+$ may not be possible in general. However, by the aforementioned approximation algorithms, we obtain a `representative' set ${W}$ of weight vectors. Then, clearly, $$P^b_{\text{out}}(C_T) :=  \{p \in \R^d \st \forall w \in {W}: \sup_{V_T\in \mathcal{A}(x_0-p)} ({w})^TU(V_T+C_T)\geq v^{w} \}$$
is a superset of $P^b(C_T)$. Moreover, $P^b_{\text{out}}(C_T)$ is the lower image of the following CVOP: 
\begin{align} \label{(P^b_out)}
&\text{maximize~~~~~} p \\
&\text{subject to~~~} ({w}^i)^T U(V_T^{i}+C_T) \geq v^{{w}^i};\notag\\
&\quad \quad \quad \quad \quad \:\; V_T^i\in \mathcal{A}(x_0-p) \text{~~ for~~} i=1,\ldots,l. \notag 
\end{align}
In general it is not known if this CVOP is bounded or not. In some cases, it is possible to formulate the problem as a bounded CVOP using an ordering cone different from $\R^d_+$. In Section~\ref{sect:Conical}, we consider a special case where the ordering cone is enlarged in order to solve problem~\eqref{(P^b_out)} using the algorithms provided in~\cite{cvop}.

{Using similar arguments one can show that the upper image of the following CVOP gives a superset $P^s_{\text{out}}(C_T)$ to $P^s(C_T)$:
	\begin{align} \label{(P^s_out)}
	&\text{minimize~~~~~} p \\
	&\text{subject to~~~} ({w}^i)^TU(V_T^i-C_T) \geq v^{{w}^i}; \notag\\
	&\quad \quad \quad \quad \quad \:\; V_T^i\in \mathcal{A}(x_0+p) \text{~~ for~~} i=1,\ldots,l. \notag  
	\end{align}}

\subsubsection{Computing a Subset of $P^b(C_T)$ and $P^s(C_T)$} \label{subsubsect:innerapp}
By Remark~\ref{rem:innerouterappr}, a finite weak $\epsilon$-solution $\bar{\mathcal{X}}=\{{X}^1, \ldots,{X}^l\}$ of~\eqref{(U)} provides an outer approximation of $V(x_0,0)$ given by $V_{\text{out}}(x_0,0):= \conv U(\bar{\mathcal{X}})-\R^q_++\epsilon \{k\}$, where $k \in \Int\R^q_+$ is fixed. Then,  $$P^b_{\text{in}}(C_T) :=  \{p \in \R^d \st \forall i=1,\ldots,l: \: \exists  V_T^i\in\mathcal{A}(x_0-p):\: U(V_T^i+C_T)\geq U({X}^i)+\epsilon k\}$$
is a subset of $P^b(C_T)$. To see that, let $p \in P^b_{\text{in}}(C_T)$, that is, for all  $i=1,\ldots,l$, there exist $V_T^i \in \mathcal{A}(x_0-p)$ such that $U(V_T^i+C_T)\geq U({X}^i)+\epsilon k$. Note that it is enough to show $V(x_0-p,C_T) \supseteq \conv U(\bar{\mathcal{X}})-\R^q_+ +\epsilon \{k\}$ as this implies $V(x_0-p,C_T)\supseteq V(x_0,0)$ and hence $p\in P^b(C_T)$. Let $\bar{u}\in\conv U(\bar{\mathcal{X}})$ be arbitrary. Then, there exist $\alpha_i\geq 0$ with $\sum_{i=1}^l\alpha_i = 1$ such that $\bar{u}=\sum_{i=1}^l\alpha_i U({X}^i)$. Note that $V_T^{\alpha} :=\sum_{i=1}^l\alpha_iV_T^i \in \mathcal{A}(x_0-p)$ by the convexity of $\mathcal{A}(x_0-p)$. Also, as the utility functional is concave we have $U(V_T^{\alpha}+C_T)\geq \sum_{i=1}^l\alpha_i U(V_T^i+C_T)$ and hence, $U(V_T^{\alpha}+C_T)\geq \bar{u}+\epsilon k$. Since for any $\bar{u}\in\conv U(\bar{\mathcal{X}})$, there exists $V_T^{\alpha}\in\mathcal{A}(x_0-p)$ such that $U(V_T^{\alpha}+C_T) \geq \bar{u}+\epsilon k$, $V(x_0-p,C_T) \supseteq \conv U(\bar{\mathcal{X}})-\R^q_+ +\epsilon \{k\}$ holds.  

$P^b_{\text{in}}(C_T)$ is the lower image of the following convex vector optimization problem: 
\begin{align} \label{(P^b_in)}
&\text{maximize~~~~~} p \\
&\text{subject to~~~} U(V_T^{i}+C_T) \geq U({X}^i)+\epsilon k;\notag\\
&\quad \quad \quad \quad \quad \:\; V_T^i\in \mathcal{A}(x_0-p) \text{~~ for~~} i=1,\ldots,l. \notag 
\end{align}

Using similar arguments one can show that the upper image $P^s_{\text{in}}(C_T)$ of the following CVOP is a subset of $P^s(C_T)$:
\begin{align} \label{(P^s_in)}
&\text{minimize~~~~~} p \\
&\text{subject to~~~} U(V_T^i-C_T) \geq U({X}^i)+\epsilon k; \notag\\
&\quad \quad \quad \quad \quad \:\; V_T^i\in \mathcal{A}(x_0+p) \text{~~ for~~} i=1,\ldots,l. \notag  
\end{align}
\begin{remark}
	It is possible that problems~\eqref{(P^b_in)} and~\eqref{(P^s_in)} {are} infeasible when the error bound $\epsilon$ in~\eqref{eq:innerouterappofupper} is not small enough, {see Example~\ref{ex:1}}. Thus, even though $P^b_{\text{in}}(C_T)$ and $P^s_{\text{in}}(C_T)$ are subsets of the set-valued buy and sell prices respectively, they could be empty sets. As it is not possible to determine the approximation error at this time, we do not call these sets outer or inner approximations, but rather sub- and supersets of $P^b(C_T)$ and $P^s(C_T)$. However, we will see that in the numerical examples of Sections~\ref{subsect:examples1} and~\ref{sect:Conical}, these sub- and supersets will approximate the set-valued prices rather well.
\end{remark}

\begin{remark} \label{rem:hedgepositions}
Note that solving the optimization problems~\eqref{(P^b_out)},~\eqref{(P^s_out)},~\eqref{(P^b_in)} and~\eqref{(P^s_in)}, one obtains a set of hedge positions $V_T^i, i=1,\ldots, l$. In practice, the decision maker could pick any of these {efficient} hedge positions as the vector valued expected utilities they provide are all maximal and they can not be compared with each other.  
\end{remark}

\subsubsection{Remarks on Computations in Some Special Cases}\label{subsubsect:remarkscomputation}
\begin{remark}\label{rem:indiff_d=1}
	For $d=1$,~\eqref{(P^b_out)},~\eqref{(P^s_out)},~\eqref{(P^b_in)} and~\eqref{(P^s_in)} are scalar convex programs. In this case, $P^b_{\text{out/in}}(C_T) = (-\infty,p^b_{\text{out/in}}]$ and $P^s_{\text{out/in}}(C_T)=[p^s_{\text{out/in}},\infty)$, where $p^b_{\text{out}}$, $p^s_{\text{out}}$, $p^b_{\text{in}}$ and $p^s_{\text{in}}$ are the optimal objective values of~\eqref{(P^b_out)},~\eqref{(P^s_out)},~\eqref{(P^b_in)} and~\eqref{(P^s_in)}, respectively. 
\end{remark}

\begin{remark}\label{rem:indiffprice_singlemulvar}
	For $d\geq 1$ and a complete preference relation which admits a single-prior single-utility representation (with utility function $u$), the set of buy prices $P^b(C_T)$ can be simplified to 
	\begin{align*}
	P^b(C_T) &= \{p \in \R^d ~ \st \sup_{V_T\in \mathcal{A}(x_0-p)} \E u(V_T+C_T)\geq v^0\},
	\end{align*}
	where  $v^0=\sup_{V_T\in \mathcal{A}(x_0)} \E u(V_T)$. Note that this is the lower image of the following convex vector optimization problem:
	\begin{align} \label{(Pb)}
	&\text{maximize~~~~~} p \\
	&\text{subject to~~~} \E u(V_T+C_T) \geq v^0  \notag\\
	&\quad \quad \quad \quad \quad \:\; V_T\in \mathcal{A}(x_0-p).  \notag
	\end{align}
	Similarly, $P^s(C_T)$ is the upper image of the following vector minimization problem 
	\begin{align*}
	&\text{minimize~~~~~} p \\
	&\text{subject to~~~} \E u(V_T-C_T) \geq v^0 \\
	&\quad \quad \quad \quad \quad \:\; V_T\in \mathcal{A}(x_0+p). 
	\end{align*}
	Thus, in the case of a complete preference relation and $d\geq 1$, it is not necessary to compute sub- and supersets of $P^b(C_T)$ and $P^s(C_T)$ as the set-valued prices $P^b(C_T)$ and $P^s(C_T)$ are upper respectively lower images of vector optimization problems itself.
\end{remark}
\section{Special Cases and Numerical Examples} \label{sect:appl}
We consider two different market models in this Section. The first one is an incomplete market where $d=1$ and the utility functions are univariate. In this setting, we consider an incomplete preference relation represented by multiple utility functions. The second one is the conical market model where $d>1$ and the utility functions are multivariate. Under this setting, we consider two different cases: a complete preference that is represented by a single multivariate utility function as in~\cite{benedetti2012}, and an incomplete preference relation represented by component-wise utility functions as in~\cite{Umax_setopt}. 
 
\subsection{An Example with Univariate Utility Functions}
\label{sect:univariatecase}
Consider a probability space $(\Omega, \mathcal{F}_T, \P)$ where ${\Omega} = \{\omega_j, j = 1, \ldots, 2^n\}$ and $\mathcal{F}_T = 2^{\Omega}$. Consider a single period model in a market consisting of one riskless and $n$ risky assets. The interest rate is assumed to be zero. Only $m<n$ of the risky assets can be traded. Assume the traded assets are indexed by $1,\ldots, m$. The current value of the traded and non-traded risky assets are $S^i_0$ for $i=1,\ldots,n$. At time $T$, the value of the traded and the non-traded assets are $S^i_T = S^i_0 \xi^i$, where $\xi^i, i=1, \ldots,n$ are $\mathcal{F}_T$ measurable random variables. Let $S_t$ be the vector of values of traded assets at time $t$, that is, $S_{t} = [S_{t}^1,\ldots,S_{t}^m]^T$ for $t = 0,T$. 

We consider a portfolio consisting of $\alpha \in \R^m$ shares of the traded assets and an amount $\beta = x_0 - \alpha^T S_0$ invested in the riskless asset, where $x_0$ is the initial endowment. Then, the wealth at the end of the period $[0,T]$ is given by $V_T = x_0 + \alpha^T(S_T-S_0)$. The set of wealth that can be generated with the initial endowment $x_0$ is $$\mathcal{A}(x_0) = \{V_T\in L^0(\mathcal{F}_T,\R) \st \exists \alpha \in \R^m \;:\: V_T \leq x_0 + \alpha^T(S_T-S_0)\},$$ which satisfies Assumption~\ref{assmp:A} a.-d. 

In this setting, we consider a claim (that may depend on the traded as well as on the non-traded assets), yielding a payoff $C_T$ at time $T$. We assume that there is a decision making committee consisting of $q$ individuals and the incomplete preference relation has a single-prior multi-utility representation. More precisely, assume that $\mathcal{Q}= \{\P\}$ and $\mathcal{U} = \{u_1,\ldots, u_q\}$ are such that Assumption~\ref{assump:utility} is satisfied. 

By Remark~~\ref{rem:CE_d=1}, the weak and the strong certainty equivalents of $C_T$ in this setting are $\Cw(C_T) = \{\cw\}$ and $\Cs(C_T) = \{\cs\}$ with
$$\cw = \inf_{i=1,\ldots,q}\{u_i^{-1}(\E u_i(C_T))\} \text{~~and~~~}\\
\cs = \sup_{i=1,\ldots,q}\{u_i^{-1}(\E u_i(C_T))\}.$$

Note that the market in consideration is incomplete, hence there is no unique complete market price. Instead, one could consider the no-arbitrage price bounds, which is nothing but the sub- and superhedging prices. However, these price bounds can be quite large for practical use, see also Remark~\ref{rem:SHP}. For the numerical example below, we compute both no-arbitrage price bounds and utility indifference price bounds to illustrate that the indifference price bounds provide a narrower interval.

In order to compute the indifference price bounds, we consider the utility maximization problem $P(x_0,0)$ in \eqref{(U)}, which can be formulated as $$\max_{\alpha \in \R^m} U(x_0 +\alpha^T(S_T-S_0)).$$ The set-valued buy and sell prices satisfy $\Int P^b(C_T) = (-\infty,p^b)$ and $\Int P^s(C_T) = (p^s,\infty)$, where $p^b$ and $p^s$ are the indifference price bounds. Note that as Assumption~\ref{assmp:A}~(e) may not be satisfied, one can not guarantee the closedness of the set-valued prices under this setting. The outer and inner approximations to the set-valued prices, $P^b_{\text{out/in}}(C_T)=(-\infty,p^b_{\text{out/in}}]$ and $P^s_{\text{out/in}}(C_T)=[p^s_{\text{out/in}},\infty)$, where $p^b_{\text{in}}\leq p^b \leq p^b_{\text{out}}$ and $p^s_{\text{out}} \leq p^s \leq p^s_{\text{in}}$, can be computed as it is explained in Remark~\ref{rem:indiff_d=1}. Below we provide a numerical example.

\label{subsect:examples1}
\begin{example}\label{ex:1} 
	Let $n=2, m=1, x_0 = 10, S_0 = [4 \:\: 6]^T$, $\P(\omega_i) = 0.25$ for $i=1,\ldots,4$ and
	\begin{align*}
	\xi^1(\omega_1)= \xi^1(\omega_2) = \frac52, \:\:\: \xi^1(\omega_3)=\xi^1(\omega_4)=\frac12, \:\:\: 
	\xi^2(\omega_1)= \xi^2(\omega_3) = \frac43, \:\:\: \xi^2(\omega_2)=\xi^2(\omega_4)=\frac23. 
	\end{align*}
	Assume that $\mathcal{U} = \{u_1,u_2\}$ where $u_1(x)=1-{e^{-x}},$ and {$u_2(x) = \log(\frac{x+10}{10})$} and let $C_T = \sum_{i=1}^2S_T^i$.
	First, as described above, we find the weak and the strong certainty equivalents of the payoff as $\cw=11.0889$ and $\cs=7.3678$, which also shows that the certainty equivalent as given in Definition~\ref{defn:CE} is empty. Then, we employ the dual algorithm proposed in~\cite{cvop} to obtain an approximation (with an error bound $\epsilon = 10^{-8}$) to the lower image of the utility maximization problem and the corresponding set of weight vectors $W$ as well as $v^w$ for each $w \in W$. The inner approximation of the lower image {$V(x_0,0)$} can be seen in Figure~\ref{ex1:V0}. When we solve the utility maximization problem, it also gives a subset of hedge positions that would yield maximal expected utilities. The expected utilities that can be generated by these hedge positions are marked on the boundary of the lower image $V(x_0,0)$.
	
	\begin{figure}[ht]
		\centering
		\includegraphics[width=11cm,height=8cm]{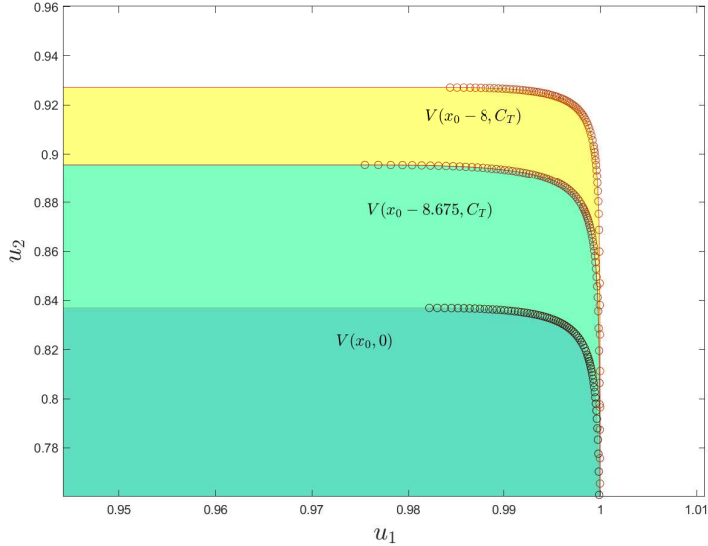}
		\caption {The inner approximation of the lower image $V(x_0,0)$ of problem $P(x_0,0)$ in \eqref{(U)} from Example~\ref{ex:1}.}
		\label{ex1:V0}
	\end{figure}
	
	After solving the utility maximization problem, we solve the single objective convex programs~\eqref{(P^b_out)}-\eqref{(P^s_in)} to compute $p^b_{\text{in}} = 8.6747$, $p^b_{\text{out}} = 8.6750$ and $p^s_{\text{out}} = 11.3250$, $p^s_{\text{in}}= 11.3253$. Similar to the utility maximization problem, when we solve these single optimization problems in order to find the price bounds, they also return a set of hedge positions which would yield maximal utilities that can be generated if the price is set accordingly. In Figure~\ref{ex1:V0}, we also plot the lower image $V(x_0-p^b_{\text{out}},C_T)$ of the corresponding utility maximization problem when the buy price is set to $p^b_{\text{out}} = 8.6750$. As expected, $V(x_0-p^b_{\text{out}},C_T) \supseteq V(x_0,0)$. The markers on the boundary of $V(x_0-p^b_{\text{out}},C_T)$ shows the expected utilities that can be generated by the hedge positions that are found by solving this utility maximization problem.
	
	For this example, the set of all superhedging and subhedging portfolios can be computed easily as $\SHP(C_T)=[12,\infty)$ and $\SubHP(C_T)=(-\infty,8]$, see Remark~\ref{rem:SHP}. In Figure~\ref{ex1:V0}, we plot the lower image $V(x_0-p_{\text{sub}},C_T)$ of the corresponding utility maximization problem when the buy price is set to the subhedging price, $p_{\text{sub}} = 8$.  As before the markers on the boundary of $V(x_0-p_{\text{sub}},C_T)$ shows the expected utilities that can be generated by the hedge positions that are found by solving this utility maximization problem. We observe that $V(x_0-p_{\text{sub}},C_T)\supseteq V(x_0-p^b_{\text{out}},C_T) \supseteq V(x_0,0)$ as expected. This illustrates that the buyer is still willing to buy the claim even if the price is higher than the subhedging price as the expected utility that he can generate is still greater than the expected utility that can be generated without buying the claim.
	
	We compute $p^b_{\text{out/in}}$ and  $p^s_{\text{out/in}}$ for different $\epsilon$ values. For this example we observe that the outer approximations are tight even for large $\epsilon$ values. Indeed, {it holds that} $p^b_{\text{out}}= 8.6750$ and $p^s_{\text{out}}= 11.3250$ for all $\epsilon$ values listed in Table~\ref{table1}. However, the inner approximations improve significantly as $\epsilon$ gets smaller. Below, we provide $p^b_{\text{in}}, p^s_{\text{in}}$ as well as the differences $p^b_{\text{out}}-p^b_{\text{in}}$ and $p^s_{\text{in}}- p^s_{\text{out}}$. Note that for large $\epsilon$, problems~\eqref{(P^b_in)} and~\eqref{(P^s_in)} turn out to be infeasible.
	\begin{table}[ht]
		\caption{Inner and Outer Approximations for $p^b$ and $p^s$ for Example~\ref{ex:1}}
		\centering
		\begin{tabular}{ |c|c|c|c|c| }
			\hline\hline
			& & & & \\ [-0.7ex]
			$\epsilon$ & $p^b_{\text{in}}$ & $p^s_{\text{in}}$ & $p^b_{\text{out}}-p^b_{\text{in}}$ & $p^s_{\text{in}}-p^s_{\text{out}}$ \\ [0.7ex]
			\hline\hline
			$10^{-4}$ & $-\infty$ & $\infty$ & $\infty$ & $\infty$  \\ 
			$10^{-5}$ & $8.3859$  & $11.6141$ & $0.2891$ & $0.2891$  \\ 
			$10^{-6}$ & $8.6496$  & $11.3504$ & $0.0254$ & $0.0254$  \\ 
			$10^{-7}$ & $8.6725$  & $11.3275$ & $0.0025$ & $0.0025$ \\ 
			$10^{-8}$ & $8.6747$  & $11.3253$ & $2.2134\times 10^{-4}$ & $2.3590\times 10^{-4}$ \\ \hline
		\end{tabular}
		\label{table1}
	\end{table}
\end{example}

\subsection{Conical Market Models and Multivariate Utility Functions}
\label{sect:Conical}
{In this Section, we consider conical market models, where we have $d>1$. In Section~\ref{subsect:complete} we study an example of a complete preference relation given by a multivariate utility function and in Section~\ref{subsect:componentwise} we consider an example of an incomplete preference relation.}

Throughout this Section, consider a financial market consisting of $d$ {currencies}, which can be traded over discrete time $t=0,1,..., T$. Let $(\Omega, \mathcal{F}, (\mathcal{F})_{t=0}^T, \P)$ be a filtered finite probability space. A portfolio vector at time $t$ is an $\mathcal{F}_t$ measurable random vector $V_t$, where the $i^{th}$ coordinate denotes the amount of money in currency $i$ at time $t$. Note that we do not fix a reference {currency as} a num\'{e}raire, instead all {currencies} are symmetrically treated.\\

For a market with proportional transaction costs, one models the market with a stochastic process $K_t$ of \emph{solvency cones}. A solvency cone $K_t$ is a polyhedral convex cone with $\R^d_+ \subsetneq K_t \ne \R^d$, and it denotes all positions in the $d$ currencies that can be traded to the zero portfolio by either exchanging or discarding currencies at time $t$. In other words, the generating vectors of $K_t$ are given by the bid-ask prices between any two currencies at time $t$. For this market, an $\R^d$-valued process, $(V_t)_{t=0}^T$ is called a \emph{self-financing} portfolio process, if $(V_t)$ is adapted and satisfies $$V_t - V_{t-1} \in -K_t \text{,~~~} \P \text{-} a.s. \text{,~~~for all~} t \in \{0,1,2,..., T\}$$ with $V_{-1}=0$. \\

We consider the linear space of all $\mathcal{F}_t$-measurable $\R^d$-valued random vectors $L_d^0(\mathcal{F}_t,\R^d)$. The set of all such vectors with values that are $\P$-$a.s.$ in $K_t$ is denoted by $L_d^0(\mathcal{F}_t, K_t)$. Furthermore, $A_T \subseteq L_d^0(\mathcal{F}_T, \R^n)$ denotes the set of all random vectors $V_T$, which are the values of a self-financing portfolio at time $T$. By definition of self financing processes, we have 
\begin{align} \label{A_T}
A_T = - L_d^0(\mathcal{F}_0, K_0)-L_d^0(\mathcal{F}_1, K_1) - \ldots - L_d^0(\mathcal{F}_T, K_T).
\end{align} 
Note that $A_T$ is the set of superhedgeable claims starting from initial endowment $0\in\R^d$ at time zero. 
Clearly, $\mathcal{A}(x_0):= x_0 + A_T$ is the set of all random vectors, which are the value of a self-financing portfolio at time $T$, where the initial endowment is $x_0 \in \R^d$ at time $t=0$.\\

\begin{remark}
	\label{rem:specialcase_assump}
	Note that $\mathcal{A}(\cdot)$ described above satisfies Assumption~\ref{assmp:A} as we will see in the following. Moreover, it satisfies a stronger monotonicity property given by 
	\begin{enumerate}
		\item[] \~{b}. If $x \leq_{K_0} y$, then $\mathcal{A}(x) \subseteq \mathcal{A}(y)$.
	\end{enumerate}
	To see that, let $x \leq_{K_0} y$, then
	\begin{align*}
	\mathcal{A}(x) &= x - L_d^0(\mathcal{F}_0, K_0)-L_d^0(\mathcal{F}_1, K_1) - \ldots - L_d^0(\mathcal{F}_T, K_T)\\
	&= y - (y-x) -L_d^0(\mathcal{F}_0, K_0)-L_d^0(\mathcal{F}_1, K_1) - \ldots - L_d^0(\mathcal{F}_T, K_T)\\
	& \subseteq y  - L_d^0(\mathcal{F}_0, K_0)-L_d^0(\mathcal{F}_1, K_1) - \ldots - L_d^0(\mathcal{F}_T, K_T)
	 = \mathcal{A}(y),
	\end{align*}
	where we used the fact that $y-x \in K_0$. 
	
	Clearly,  property~\~{b}. implies Assumption~\ref{assmp:A} b. as $\R^q_+ \subseteq K_0$. Also, using the convexity of $K_t$ and $L_d^0(\mathcal{F}_t,K_t)$, for $t =0,\ldots,T$, and by definition of $\mathcal{A}(\cdot)$, it is easy to see that Assumption~\ref{assmp:A} a., c. and d. hold. {Finally, if $A_T$ is closed, then Assumption~\ref{assmp:A} e. also holds. Note that $A_T$ is closed under the standard no arbitrage assumptions, see for instance~\cite{NA_conical}.} 
\end{remark}

\begin{remark} 
	\label{rem:strictprices_componntwise}
	For conical market models, in addition to Proposition~\ref{prop:lowerconvexset}, $P^b(C_T), P^s(C_T)$ satisfy also the following properties:
	\begin{enumerate}[a.]
		\item $P^b(C_T)$ and $P^s(C_T)$ are convex lower, respectively upper, sets with respect to $\leq_{K_0}$. 
		\item $P^b(\cdot)$ and $P^s(\cdot)$ are increasing with respect to the partial order $\leq$, in the sense of set orders $\curlyeqprec_{K_0}$ and $\preccurlyeq_{K_0}$, respectively: For  $C_T^1, C_T^2 \in L(\mathcal{F}_T, \R^d)$, if $C_T^1 \leq C_T^2$, then $P^b(C_T^1) \curlyeqprec_{K_0} P^b(C_T^2)$ and $P^s(C_T^1) \preccurlyeq_{K_0} P^s(C_T^2)$. 
	\end{enumerate}
	These can be shown using the fact that $\mathcal{A}(x)$ satisfies the additional property \~{b}. given in Remark~\ref{rem:specialcase_assump}.
\end{remark}

By Remark~\ref{rem:strictprices_componntwise} a., since $K_0 \supsetneq \R^d_+$, the optimization problems~\eqref{(P^b_out)},~\eqref{(P^s_out)} are not (and \eqref{(P^b_in)} and \eqref{(P^s_in)} may not be) bounded in the sense of vector optimization when the ordering cones of these problems is set as $\R^d_+$. Since the algorithms provided in~\cite{cvop} work only for bounded convex vector optimization problems, and since property ~\~{b}. and the properties in Remark~\ref{rem:strictprices_componntwise} are satisfied, one can set the ordering cones of these problems to be $K_0$ in a model with proportional transaction costs. In general, one still can not guarantee that these problems are bounded with respect to these extended ordering cones. However, the algorithms in~\cite{cvop} can determine if the problem is unbounded or bounded and solves it in case it is bounded. In the numerical examples considered below, the problems will turn out to be indeed bounded with respect  to $K_0$.

We will now consider two special cases in this market model. First, a complete preference relation represented by a single multivariate utility function is used. Then, we consider an incomplete preference relation represented by a single-prior multi-utility representation where the multivariate utility functions are defined component-wise. 

\subsubsection{A Single Multivariate Utility Function Case} 
\label{subsect:complete}
We consider a conical market model as described above and assume that the preference relation is complete and represented by a single multivariate utility function $u$, as discussed in Remark~\ref{rem:indiffprice_singlemulvar}. 

Indifference pricing with a multivariate utility function, where proportional transaction costs are modeled by solvency cones, has also been studied by Benedetti and Campi in~\cite{benedetti2012}. They consider a continuous time setting where the probability space is not necessarily finite. Accordingly, they have further assumptions on the multivariate utility function. The utility indifference buy price $p^b_j \in \R$ of a claim $C_T \in L(\mathcal{F_T},\R^d)$ in terms of currency $j \in \{1,\ldots,d\}$ is defined in~\cite{benedetti2012} as a solution to
\begin{equation}\label{eq:mulvar_scalardefn}
\sup_{V_T \in \mathcal{A}(x_0-e_jp^b_j)}\E{u}(V_T+C_T) = v^0,
\end{equation}
where $v^0 := \sup_{V_T \in \mathcal{A}(x_0)}\E{u}(V_T)$ and $e_j\in\R^d$ is the unit vector with only $j^{th}$ component being nonzero. It has been shown in~\cite{benedetti2012} that for all $j\in\{1,\ldots,d\}$, $p^b_j \in \R$ exists uniquely. 

We will now show that the utility indifference buy price $p^b_j$ defined in~\cite{benedetti2012} is contained {on the boundary of the set-valued buy price $\bd P^b(C_T)$} defined here. Thus, $p^b_j$ can be seen as a special case if one is only interested in the price expressed in currency $j$. However, the set-valued prices also allow for situations in which the buyer (or seller) has capital in several currencies, see also Remark~\ref{rem:scalarization_2} below. Then, it would be more expensive, if one would need to exchange that portfolio into a particular currency to buy the claim at price $p^b_j$ because of the transaction costs involved. Let us now show the relation between $p^b_j$ and $P^b(C_T)$.

First note that $p^b_j$ is the optimal objective value of the scalar convex program given by
\begin{align}\label{(p1)}
\nonumber&\text{maximize~~~~~} p \\
&\text{subject to~~~} \E {u}(V_T+C_T) \geq v^0 \\
\nonumber&\quad \quad \quad \quad \quad \:\; V_T\in \mathcal{A}(x_0-pe_j), 
\end{align}
which is equivalent to solving
\begin{align}\label{(p2)}
\nonumber&\text{maximize~~~~~} p_j \\
&\text{subject to~~~} \E {u}(V_T+C_T) \geq v^0 \\
\nonumber&\quad \quad \quad \quad \quad \:\; V_T\in \mathcal{A}(x_0-p)\\
\nonumber&\quad \quad \quad \quad \quad \:\; p_i\geq 0 \text{~~for~} i\neq j
\end{align}
in the following sense: If $p^1 \in \R, V_T^1\in\mathcal{A}(x_0-p^1e_j)$ is optimal for~\eqref{(p1)}, then $p=p^1 e_j \in \R^d, V_T^1 \in \mathcal{A}(x_0-p^1e_j)$ is optimal for~\eqref{(p2)}. On the other hand, if $p^2 \in \R^d, V_T\in\mathcal{A}(x_0-p^2)$ is optimal for~\eqref{(p2)}, then $p_j^2\in\R, V_T \in\mathcal{A}(x_0-p_j^2e_j)$ is optimal for~\eqref{(p1)}. Note that the epigraph form of~\eqref{(p2)} is 
\begin{align*}
&\text{maximize~~~~~} \rho \\ 
&\text{subject to~~~} \E {u}(V_T+C_T) \geq v^0 \\
&\quad \quad \quad \quad \quad \:\; V_T\in \mathcal{A}(x_0-p)\\
&\quad \quad \quad \quad \quad \:\; p \geq \rho e_j,
\end{align*}
which is the Pascoletti-Serafini scalarization of the vector optimization problem~\eqref{(Pb)} provided in Remark~\ref{rem:indiffprice_singlemulvar} with reference point $v = 0\in\R^d$ and direction $d = e_j$. Then, by Proposition~\ref{prop:scalarizationPS}, a solution of the scalarization problem is a weak minimizer for the vector optimization problem given by~\eqref{(Pb)}. Thus, $p^b_j$ corresponds to the point on the boundary of $P^b(C_T)$ that provides the utility indifference buy price in currency $j$.

The utility indifference sell price $p^s_j$ of $C_T$ in terms of currency $j$ is defined similarly and can in total analogy be computed by solving a convex program which is equivalent to a Pascoletti-Serafini scalarization of the corresponding vector problem. 

With these observations, we conclude that the set-valued buy and sell prices for multivariate utility functions as described in Remark~\ref{rem:indiffprice_singlemulvar} contain the real-valued utility indifference buy and sell prices in terms of a fixed currency as defined by Benedetti and Campi in~\cite{benedetti2012} in the sense that $p^b_je_j \in \bd P^b(C_T)$ and $p^s_je_j \in \bd P^s(C_T)$. 

\begin{remark}\label{rem:scalarization_1}
	There are many different scalarization approaches for vector optimization. The particular scalarization described above yield buy and sell prices in terms of a single currency. However, depending on the situation one could consider different scalarizations to compute a vector-valued price bound on the boundary of $P^b(\cdot)$ or $P^s(\cdot)$. Indeed, for practice it might be sufficient to obtain a single (or finitely many) vector-valued price bound(s) by solving single objective optimization problem(s) instead of solving a vector optimization problem.    	
\end{remark}
\begin{remark}\label{rem:scalarization_2}
	Remark~\ref{rem:scalarization_1} can even be extended in many different ways. For example, assume there is a potential buyer and a potential seller for a certain payoff $C_T$ with multivariate utility functions $u^b,u^s$, and initial endowment vectors $x^b,x^s$ in the $d$ currencies, respectively. In order to decide if there would be a trade between them, one could check if the set-valued buy price of the buyer and the set-valued sell price of the seller have a nonempty intersection. For this, one needs to compute $v^b := \sup_{V_T \in \mathcal{A}(x^b)}\E{u^b}(V_T)$ and $v^s := \sup_{V_T \in \mathcal{A}(x^s)}\E{u^s}(V_T)$ first. Then, the buy and sell prices are 
	\begin{align*}
	P^b(C_T) &= \{p \in \R^d ~ \st \sup_{V_T\in \mathcal{A}(x^b-p)} \E u^b(V_T+C_T)\geq v^b\},\\
	P^s(C_T) &= \{p \in \R^d ~ \st \sup_{V_T\in \mathcal{A}(x^s+p)} \E u^s(V_T-C_T)\geq v^s\}.
	\end{align*}
	In order to check if these sets have a nonempty intersection, one way is to minimize the distance between them by solving the following single objective problem:
	\begin{align*}
	\text{~minimize~~~~~~~~} & \norm{p^b-p^s}\\
	\text{subject to~~~~~~~~}  & \E u^b(V_T^b+C_T) \geq v^b, \\
	\text{~~~~~~~~~~~~~~~~~~~~~}  & \E u^s(V_T^s-C_T) \geq v^s,\\
	\text{~~~~~~~~~~~~~~~~~~~~~}  & V_T^b\in \mathcal{A}(x^b-p^b),\\
	\text{~~~~~~~~~~~~~~~~~~~~~}  & V_T^s\in \mathcal{A}(x^s+p^s).
	\end{align*}
	If the objective function value is zero, then the optimal solution yields a vector-valued buy/sell price $p^b=p^s$ as well as the trading strategies for the buyer and the seller. 
\end{remark}

\begin{example} \label{ex:multivarcomplete}
	Consider a financial market with $d=2$ currencies which can be traded over a single time period. The probability space at terminal time $T>0$ is given by $(\Omega,\mathcal{F}_T,\P)$ with $\Omega = \{\omega_1,\omega_2\}$, $\mathcal{F}_T = 2^{\Omega}$ and $p_i = \P(\omega_i) = \frac12$ for $i=1,2$. The generating vectors of the solvency cones $K_0, K_1(\omega_1)$ and $K_1(\omega_2)$ are given by the columns of the matrices $$K_0 = \left[ \begin{array}{cccc}
	1 & -0.9  \\
	-0.9 & 1  \end{array} \right],\:\: K_1^{(1)} = \left[ \begin{array}{cccc}
	2 & -1.9  \\
	-1 & 1  \end{array} \right] \text{~~and~~~} K_1^{(2)} = \left[ \begin{array}{cccc}
	1 & -1  \\
	-2 & 2.1  \end{array} \right],$$ respectively. Assume that the initial position is $x_0 = 0 \in \R^2$. We consider a payoff $C_T$ given by $C_T(\omega_1) = [1 \:\: 0]^T$, and $C_T(\omega_2) = [0 \:\: 1]^T$ and a multivariate utility function given by $u(x) = 1 - 0.5(e^{-x_1}+e^{-x_2})$.\\
	First, we compute the certainty equivalent of $C_T$ under the preference relation represented by the utility function $u$. This can be done as explained in Remark~\ref{rem:CEcompletemulvarU}, but for this example it is also possible to compute it analytically. Indeed, $\C(C_T)$ is nothing but the indifference curve for $u(x)$ computed at $u(x) = \E u(C_T) = \frac12(1-e^{-1})$. Also, in this example $\C(C_T)=\Cw(C_T)=\Cs(C_T)$. Figure~\ref{fig:CEmultivar} shows $\Cupp(C_T), \Clow(C_T)$ and $\C(C_T)$.\\ 
	\begin{figure}[ht]
		\centering
		\includegraphics[width=9cm,height=6cm]{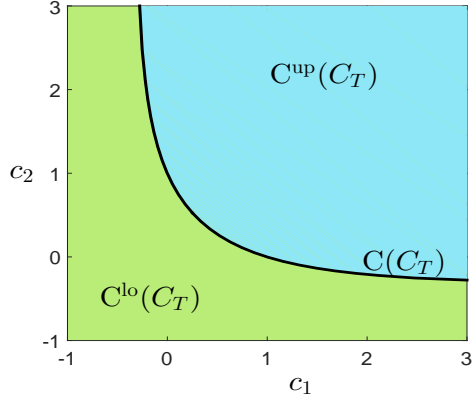}
		\caption {$\C(C_T)=\Cs(C_T)=\Cw(C_T)$ (black line), $\Cupp(C_T)$ (blue), $\Clow(C_T)$ (green) for Example~\ref{ex:multivarcomplete}.}
		\label{fig:CEmultivar}
	\end{figure}
	In order to compute the set-valued buy and sell prices, first we find $v^0$ as the optimal objective value of the utility maximization problem $P(x_0,0)$ given in \eqref{(U)}, which can be formulated as 
	\begin{align*}
	&\text{maximize~~} \sum_{i=1}^2 p_i u\left(x_0 - (K_0\alpha)^T - (K_1^{(i)}\beta_i)^T\right) \\
	&\text{subject to~~} \alpha, \beta_1, \beta_2 \in \R^2_+.
	\end{align*}
	Clearly, the feasible region is closed and the problem is bounded as the utility function is bounded. Then, as described in Remark~\ref{rem:indiffprice_singlemulvar}, we compute the set-valued buy and sell prices using the dual convex Benson algorithm from~\cite{cvop} with error bound $\epsilon = 10^{-5}$. In addition, we compute the set of all superhedging and subhedging portfolios, see Remark~\ref{rem:SHP}. Note that $\SubHP(C_T)$ and $\SHP(C_T)$ can be computed exactly by solving linear vector optimization problems, see~\cite{SHP}. The scalar buy and sell prices as defined in~\cite{benedetti2012} are also computed. Figure~\ref{fig:multivar} shows the set-valued buy and sell prices, the superhedging and subhedging portfolios and the scalar prices in terms of the corresponding currency. As it can be seen from the figure, the scalar buy and sell prices as in~\cite{benedetti2012} are points on the boundary of the buy and sell prices where one component is fixed at zero as expected. Moreover, $\SubHP(C_T)\subseteq P^b(C_T)$ and $\SHP(C_T)\subseteq P^s(C_T)$ as it was proven in Remark~\ref{rem:SHP}.
	\begin{figure}[ht]
		\centering
		\includegraphics[width=12.5cm,height=8cm]{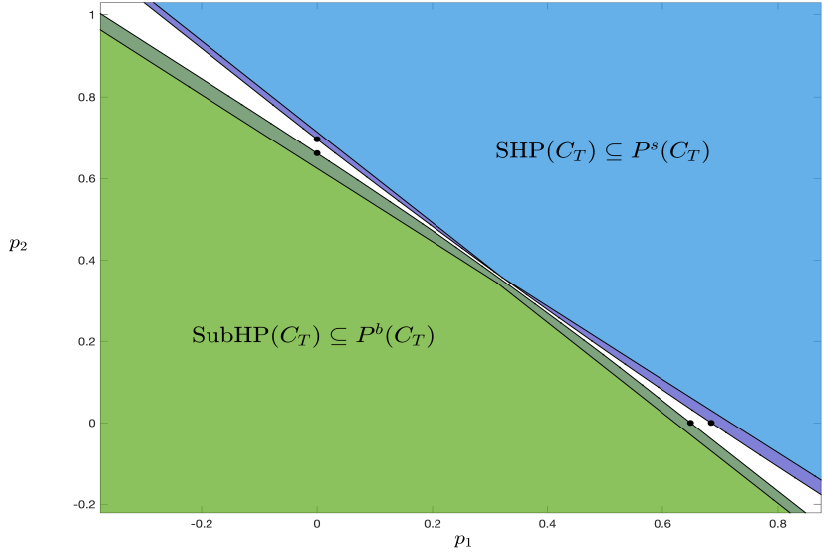}
		\caption {Set-valued buy price $P^b(C_T)$ (dark green) and sell price $P^s(C_T)$ (dark blue); set of all subhedging portfolios $\SubHP(C_T)$ (light green) and superhedging portfolios $\SHP(C_T)$ (light blue); scalar buy and sell prices $p^b_1=0.6487, p^b_2=0.6622, p^s_1=0.6846, p^s_2=0.6966$ (black marks) for Example~\ref{ex:multivarcomplete}.}
		\label{fig:multivar}
	\end{figure}
\end{example}

In order to illustrate the use of the proposed set valued prices compared to the scalar indifference prices as well as the sub/superhedging portfolios, we consider the following scenario. Assume that a possible buyer in this set up has the position $(0.2, 0.45)^T$ at time zero. It can be seen from Figure~\ref{fig:multivar} that this is in $P^b(C_T)$ but not in $\SubHP(C_T)$. Hence, this decision maker would not buy the claim if she considers the subhedging portfolios. However, she would buy it considering the utility that can be generated as buying the claim would yield more expected utility than not buying it. Moreover, if the prices are given in terms of a single currency as in \cite{benedetti2012}, then, in order to buy the claim, she needs to trade her position to the first or the second currency using the bid-ask prices provided at time zero ($K_0$). Notice that she can generate at most $0.605$ in the first currency or $0.63$ in the second currency at time zero. Since $0.605<0.6487 = p^b_1$ and $0.63 < 0.6622 = p^b_2$ the decision maker {can} not buy the claim {at} these scalar prices. In other words, buying the claim at a vector valued price, she is able to generate a higher utility than not buying it; however, if she {has to trade} at time zero in order to buy the claim at a scalar price, she can not anymore {exchange her position into sufficient capital} to buy it.
\subsubsection{Component-wise Utility Functions Case}
\label{subsect:componentwise}

Under the conical market model described above, we consider a component-wise utility representation as in~\cite{Umax_setopt}, where the utility function of each currency is considered separately. More specifically, we consider a single-prior multi-utility representation with $\mathcal{U} = \{\bar{u}^1,\ldots, \bar{u}^d\}$, where $\bar{u}^i:\R^d \rightarrow \R\cup\{-\infty\}$ is in the form of $\bar{u}^i(x) = u_i(x_i)$ for some univariate utility function $u_i : \R \rightarrow \R \cup \{- \infty\}$. Clearly, $u^i$ is increasing with respect to $\leq$ as $\bar{u}_i$ is increasing on its domain and $\bar{u}^i$ is proper concave as $u_i$ also is. Thus, $\bar{u}^i$ is a multivariate utility function. 

Under this setting, the sets $\Cupp(Z)$ and $\Clow(Z)$ for some $Z \in L^0(\mathcal{F},\R^d)$ simplify to
\begin{align*}
\Cupp(Z) &= \{c \in \R^d \st \forall i \in \{1,\ldots, d\}: u_i(c_i) \geq \E u_i(Z_i)\} =  \tilde{c} + \R^d_+,\\
\Clow(Z) &= \{c \in \R^d \st \forall i \in \{1,\ldots, d\}: u_i(c_i) \leq \E u_i(Z_i)\} = \tilde{c} - \R^d_+,
\end{align*} 
where $\tilde{c}:=(u_1^{-1}(\E u_1(Z_1)),\ldots, u_d^{-1}(\E u_d(Z_d)))$. Hence, the certainty equivalent of $Z$ is a singleton, namely, $\C(Z) = \{\tilde{c}\}$. Moreover, the strong and the weak certainty equivalents are the boundaries of $\Clow(Z)$ and $\Cupp(Z)$, respectively.

Even though the certainty equivalent has a much simpler form, the buy and sell prices do not necessarily simplify and one needs to compute the outer and inner sets as described in Sections~\ref{subsubsect:outerapp} and~\ref{subsubsect:innerapp}.

Below we provide an illustrative numerical example.

\begin{example}
	\label{ex:comp2dim} 
	Consider the same market model and payoff $C_T$ as in Example~\ref{ex:multivarcomplete}. The scalar utility functions are given by $u_i(x_i) = 1 - e^{-x_i}, x_i \geq 0$ for $i=1,2$. The certainty equivalent of $C_T$ under this utility representation is computed as $(0.3799, \: 0.3799)^T \in \R^2$. Figure~\ref{fig:CEmultivarcompwise} shows $\Cupp(C_T), \Clow(C_T), \Cw(C_T), \Cs(C_T)$ and $\C(C_T)$.
	
	\begin{figure}[ht]
		\centering
		\includegraphics[width=9cm,height=7cm]{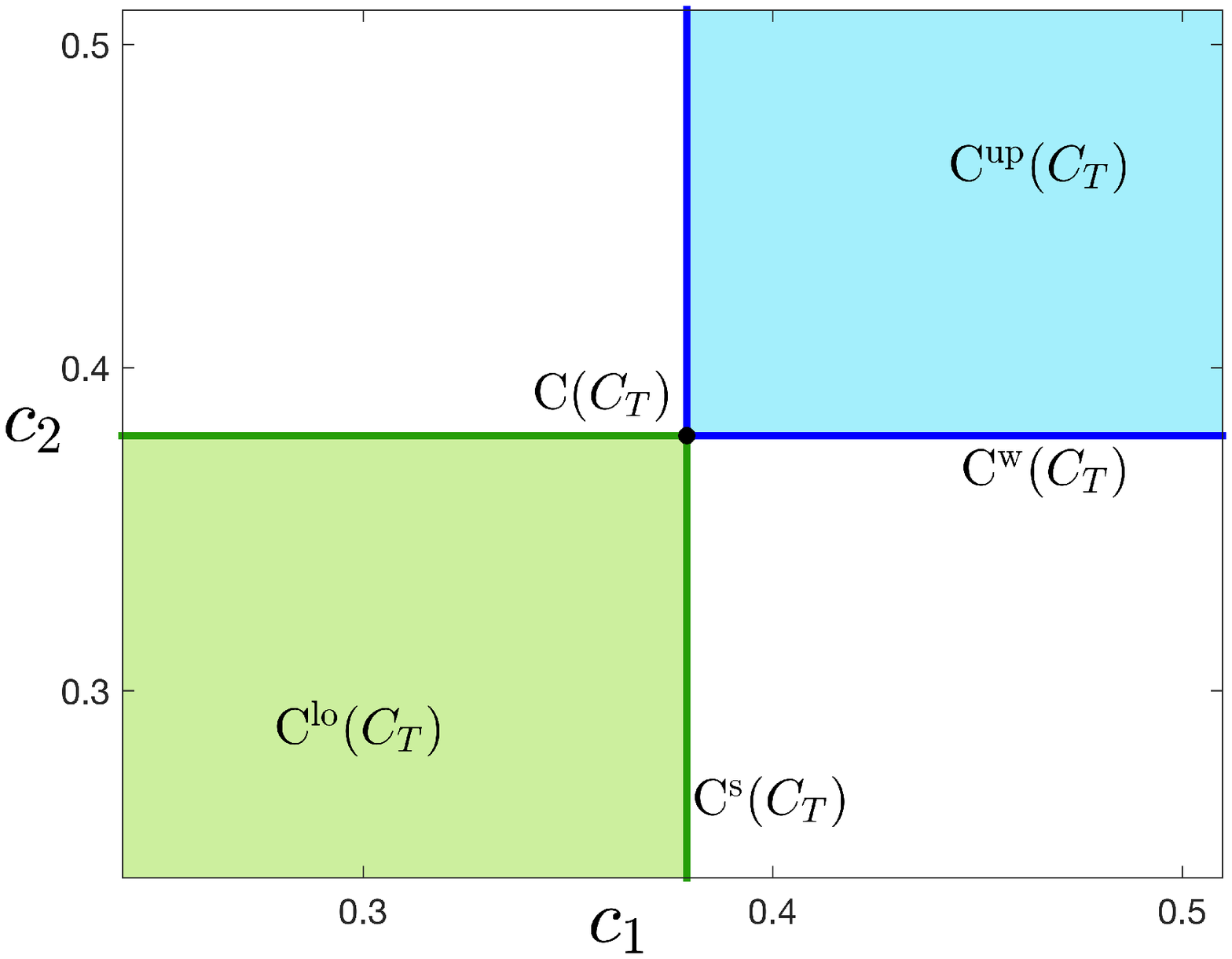}
		\caption {$\C(C_T)$ (black mark), $\Cupp(C_T)$ (blue), $\Clow(C_T)$ (green), $\Cs(C_T)$ (dark green line), and $\Cw(C_T)$ (dark blue line) for Example~\ref{ex:comp2dim}.}
		\label{fig:CEmultivarcompwise}
	\end{figure}
	
	Denoting $u=[u_1\; u_2]^T$, the utility maximization problem~$P(x_0,0)$ in \eqref{(U)} can be written as
	\begin{align*}
	&\text{maximize~~} \sum_{i=1}^2 p_i u\left(x_0 - (K_0\alpha)^T - (K_1^{(i)}\beta_i)^T\right) \\
	&\text{subject to~~} \alpha, \beta_1, \beta_2 \in \R^2_+.
	\end{align*}
	
	Clearly, the feasible region is closed and the problem is bounded as the utility functions are bounded. Hence, the indifference price bounds can be computed as explained in Section~\ref{subsubsect:strictcompute}. The error bound is taken as $\epsilon = 10^{-4}$. In Figure~\ref{fig:multivar_2} one can see $P^b(C_T)$ and $P^s(C_T)$ as well as $\SHP(C_T)$ and $\SubHP(C_T)$, which are the same as in Example~\ref{ex:multivarcomplete}. Note that we compute the supersets and subsets of $P^b(C_T)$ and $P^s(C_T)$. It is not possible to distinguish the boundaries of the inner and outer sets in the figure as they are very close to each other.
	
	\begin{figure}[ht]
		\centering
		\includegraphics[width=15.1cm,height=8.2cm]{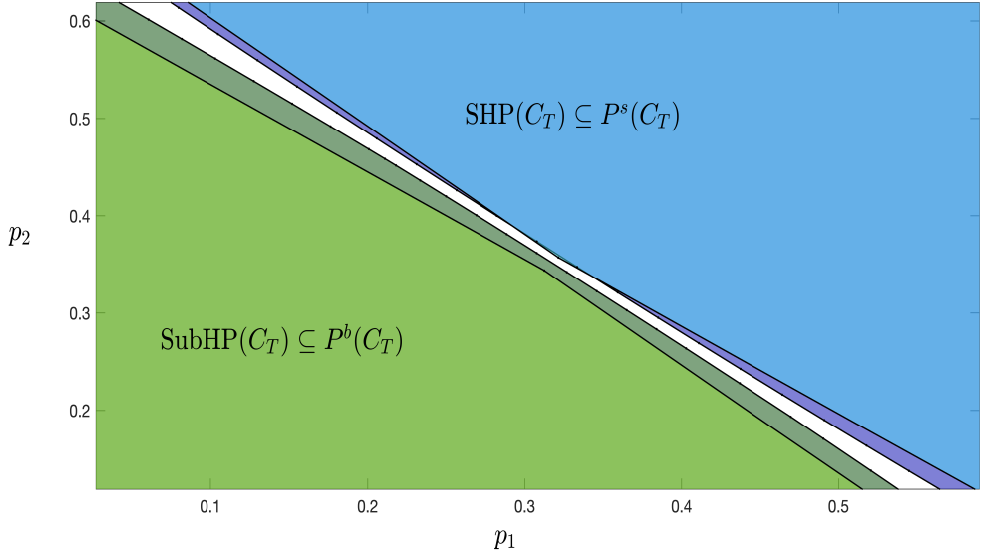}
		\caption {Set-valued buy price $P^b(C_T)$ (dark green) and sell price $P^s(C_T)$ (dark blue); set of all subhedging portfolios $\SubHP(C_T)$ (light green) and superhedging portfolios $\SHP(C_T)$ (light blue) for Example~\ref{ex:comp2dim}.}
		\label{fig:multivar_2}
	\end{figure}
\end{example}

\appendix

\section{Appendix: Proof of the results from Section~\ref{sect:indifference}}
\label{sect:App}
\begin{proof}[Proof of Proposition~\ref{prop:lowerconvexset}]
	\begin{enumerate}
		\item 
		We first show that $P^b(C_T)$ is a lower set, that is, $P^b(C_T) = P^b(C_T) - \R^d_+$. It is clear that $P^b(C_T) \subseteq P^b(C_T) - \R^d_+$. To show the reverse inclusion, let $p^b \in P^b(C_T)$ and $r \in \R^d_+$. By Assumption~\ref{assmp:A} c., $\mathcal{A}(x_0 - p^b +r) \supseteq \mathcal{A}(x_0-p^b)$. Then, by definition of $V(\cdot, \cdot)$ and since $p^b \in P^b(C_T)$, we have $V(x_0-p^p+r, C_T) \supseteq V(x_0-p^b, C_T) \supseteq V(x_0, 0)$. Thus, $p^b - r \in P^b(C_T)$. 
		
		Next we show that $P^b(C_T) \subseteq \R^d$ is a convex set. Let $p^1, p^2 \in P^b(C_T)$, $\lambda \in [0,1]$ and $p^{\lambda}:=\lambda p^1 + (1-\lambda)p^2$. First, note that $V(x_0-p^i, C_T) \supseteq V(x_0, 0)$ for $i=1,2$ implies $$\lambda V(x_0-p^1, C_T) + (1-\lambda)V(x_0-p^2, C_T) \supseteq V(x_0,0).$$ In order to show $p^{\lambda} \in P^b(C_T)$, it would be enough to prove
		\begin{equation}
		\label{eq:convexset1}
		V(x_0-p^{\lambda}, C_T) \supseteq \lambda V(x_0-p^1, C_T) + (1-\lambda)V(x_0-p^2, C_T).
		\end{equation} 
		Consider $U(V_T^i)-r^i \in V(x_0-p^i, C_T)$, where $V_T^i \in \mathcal{A}(x_0-p^i)$ and $r^i \in \R^q_+$ for $i =1,2$. By concavity of $u \in \mathcal{U}$, we have 
		\begin{align*}
		\lambda U(V_T^1+C_T)+ (1-\lambda)U(V_T^2+C_T) &\leq U(\lambda V_T^1 + (1-\lambda)V_T^2 + C_T). 
		\end{align*}
		Clearly, for some $\tilde{r}\in \R^q_+$, where $r^{\lambda}= \lambda r^1 + (1-\lambda)r^2$, it holds
		\begin{align*}
		\lambda U(V_T^1+C_T)+ (1-\lambda)U(V_T^2+C_T) -r^{\lambda} &= U(\lambda V_T^1 + (1-\lambda)V_T^2 + C_T) -r^{\lambda}- \tilde{r}.
		\end{align*}
		As $\lambda V_T^1 + (1-\lambda)V_T^2 \in \mathcal{A}(x_0 - p^{\lambda})$ by Assumption~\ref{assmp:A} b., we have
		\begin{align*}
		U(\lambda V_T^1 + (1-\lambda)V_T^2 + C_T) -r^{\lambda}- \tilde{r} \in V(x_0-p^{\lambda}, C_T),
		\end{align*}
		which implies~\eqref{eq:convexset1}.
		
		\item 	We show that $P^b(\cdot)$ is monotone with respect to $\leq$ and $\curlyeqprec$.
		Let $C_T^1, C_T^2 \in L(\mathcal{F}, \R^d)$ with $C_T^1 \leq C_T^2$ and $p^b \in P^b(C_T^1)$. Then, by Remark~\ref{prop:V_properties} a., and by the definition of $P^b(\cdot)$,
		$$V(x_0-p^b, C_T^2) \supseteq V(x_0-p^b, C_T^1) \supseteq V(x_0,0).$$ 
		This concludes that $P^b(C_T^1)\subseteq P^b(C_T^2)$, which implies $P^b(C_T^1) \curlyeqprec P^b(C_T^2)$. 
		
		\item
		We show $ \lambda P^b(C_T^1) + (1-\lambda)P^b(C_T^2) \subseteq P^b(C_T^{\lambda})$, which implies~\eqref{eq:Pbconcave}. Let $p^i \in P^b(C_T^i)$, that is, $V(x_0, 0) \subseteq V(x_0-p^i, C^i_T)$ for $i = 1,2$. Clearly, 
		\begin{equation*}
		V(x_0, 0) \subseteq \lambda V(x_0-p^1, C_T^1) + (1-\lambda)V(x_0-p^2, C_T^2).
		\end{equation*}
		Then, it would be enough to show that 
		\begin{equation}
		\label{eq:conc_eq1}
		\lambda V(x_0-p^1, C_T^1) + (1-\lambda)V(x_0-p^2, C_T^2) \subseteq V(x_0 - p^{\lambda}, C_T^{\lambda}),
		\end{equation} where $p^{\lambda}:= \lambda p^1 + (1-\lambda)p^2$. 
		Let $V_T^i \in \mathcal{A}(x_0-p^i)$ and $r^i \in \R^q_+$ for $i=1,2$. 
		By the concavity of $U(\cdot)$, we have
		\begin{align*}
		\lambda (U(V_T^1+C_T^1)-r^1) & + (1-\lambda) (U(V_T^2+C_T^2)-r^2) \\
		& = \lambda U(V_T^1+C_T^1) +  (1-\lambda) U(V_T^2+C_T^2) - r^{\lambda} \\
		& = U(\lambda V_T^1 + (1-\lambda)V_T^2 + C_T^{\lambda}) -r^{\lambda} -\tilde{r}
		\end{align*}
		for some $\tilde{r} \in \R^q_+$, where $r^{\lambda} := \lambda r^1 + (1-\lambda)r^2$. Note that $\lambda V_T^1 + (1-\lambda)V_T^2 \in \mathcal{A}(x_0-p^{\lambda})$ by Assumption~\ref{assmp:A} b. Then,~\eqref{eq:conc_eq1} is implied by
		\begin{equation*}
		\lambda (U(V_T^1+C_T^1)-r^1) + (1-\lambda) (U(V_T^2+C_T^2)-r^2) \in V(x_0 - p^{\lambda}, C^T_{\lambda}). 
		\end{equation*}
	\end{enumerate}
\end{proof}

\begin{proof}[Proof of Proposition~\ref{prop:intersectionPbPs}]
	\begin{enumerate}
		\item 
		Let $p \in P^b(C_T)\cap P^s(C_T)$, that is, $V(x_0-p,C_T) \supseteq V(x_0,0)$ and $V(x_0+p,-C_T) \supseteq V(x_0,0)$. 
		Let $v^b \in \bigcup_{V_T^b\in\mathcal{A}(x_0-p)}[U(V_T+C_T)-\R^q_+]$ and $v^s \in \bigcup_{V_T^b\in\mathcal{A}(x_0+p)}[U(V_T-C_T)-\R^q_+]$. Then, $v^b = U(V_T^b+C_T)-r^b$ and $v^s = U(V_T^s-C_T)-r^s$ for some $V_T^b\in \mathcal{A}(x_0-p)$, $V_T^s\in\mathcal{A}(x_0+p)$ and $r^b, r^s \in \R^q_+$. By the concavity of $U$, we have
		\begin{align*}
		\frac12 v^b+\frac12 v^s \leq U(\frac12 V_T^b+\frac12 V_T^s)-\frac12(r^b+r^s). 
		\end{align*}
		Note that for any $V_T^b\in\mathcal{A}(x_0-p)$ and $V_T^s\in\mathcal{A}(x_0+p)$, we have $V_T^b + p, V_T^s - p \in \mathcal{A}(x_0)$ and $\frac12 V_T^b + \frac12 V_T^s \in \mathcal{A}(x_0)$ by Assumption~\ref{assmp:A} d.
		Then, $U(\frac12 V_T^b+\frac12 V_T^s)-\frac12(r^b+r^s) \in V(x_0,0)$ and hence, $\frac12 v^b+\frac12 v^s \in V(x_0,0)$. \\ Now, for any $V^b \in V(x_0-p,C_T), V^s \in V(x_0+p,-C_T)$, there exists sequences $(v_n^b)\in \bigcup_{V_T^b\in\mathcal{A}(x_0-p)}[U(V_T+C_T)-\R^q_+], (v_m^s)\in\bigcup_{V_T^b\in\mathcal{A}(x_0+p)}[U(V_T-C_T)-\R^q_+]$ with $\lim_{n\rightarrow\infty} v^b_n = V^b$ and $\lim_{n\rightarrow\infty} v^s_n = V^s$. Since for each $n,m \geq 1$, $\frac12 v_n^b+\frac12 v_m^s \in V(x_0,0)$ and since $V(x_0,0)$ is closed, we have $\frac12 V^b+\frac12 V^s \in V(x_0,0)$, which proves that
		\begin{equation}\label{eq:equalimages}
		\frac12 V(x_0-p,C_T)+\frac12 V(x_0+p,-C_T) \subseteq V(x_0,0).
		\end{equation}
		As $V(x_0-p,C_T),V(x_0+p,-C_T),V(x_0,0)$ are lower closed sets and both $V(x_0-p,C_T), V(x_0+p,-C_T)$ are supersets of $V(x_0,0)$,~\eqref{eq:equalimages} holds only if we have $V(x_0-p,C_T) = V(x_0,0) = V(x_0+p,-C_T)$. 
		\item
		
		Let $p\in P^b(C_T)\cap P^s(C_T)$. Then, by Proposition~\ref{prop:intersectionPbPs}-1., $V(x_0-p,C_T)=V(x_0,0)$. Moreover, by Assumption~\ref{assmp:A} d., and by Remark~\ref{prop:V_properties} d., $V(x_0-p+\epsilon,C_T)\supsetneq~V(x_0,0)$ for any $\epsilon \in \Int \R^d_+$. Thus, $p - \epsilon \notin P^b(C_T)\cap P^s(C_T)$ for any $\epsilon \in \Int \R^d_+$. Then, $\Int (P^b(C_T)\cap P^s(C_T)) = \emptyset$. 
	\end{enumerate}
\end{proof}

\begin{proof}[Proof of Proposition~\ref{prop:closedness}]
	We will show that $P^b(C_T)$ is closed. By Proposition~\ref{prop:lowerconvexset}, it is enough to show that $\bd P^b(C_T) \subseteq P^b(C_T)$. Moreover, as $P^b(C_T)$ is a lower set for any $p \in \bd P^b(C_T)$, there exists a sequence $(p^n)_{n\geq 1} \in P^b(C_T)$ such that $p^{n+1} \geq p^{n}$ for all $n\geq 1$ and $\lim_{n \rightarrow \infty} p^n = p$. The proof will be complete if 
	\begin{equation}\label{eq:inclusion}
	V(x_0-p,C_T) \supseteq \bigcap_{n\geq1}V(x_0-p^n,C_T)
	\end{equation} 
	holds. Indeed, together with the fact that $V(x_0-p^n,C_T)\supseteq V(x_0,0)$ for all $n \geq 1$,~\eqref{eq:inclusion} implies that $V(x_0-p,C_T) \supseteq \bigcap_{n\geq1} V(x_0-p^n,C_T) \supseteq V(x_0,0)$; hence $p \in P^b(C_T)$.
	
	In order to show~\eqref{eq:inclusion}, first note that $\mathcal{A}(x_0-p)= \bigcap_{n\geq 1} \mathcal{A}(x_0-p^n)$ by Assumption~\ref{assmp:A} e. Then, we have 
	\begin{align*}
	V(x_0-p,C_T) &= \cl \bigcup_{V_T \in \mathcal{A}(x_0-p)} \left(U(V_T+C_T)-\R^q_+ \right)\\
	&=\cl \{U(V_T+C_T)-r \st \forall n \geq 1: V_T \in \mathcal{A}(x_0-p^n), r \in \R^q_+\}\\
	&=\cl \bigcap_{n\geq 1}\bigcup_{V_T \in \mathcal{A}(x_0-p^n)}\left(U(V_T+C_T) - \R^q_+\right).
	\end{align*}	
	Let $y \in \bigcap_{n \geq 1}V(x_0-p^n,C_T)$. Since $y$ is an element of the lower image $V(x_0-p^n,C_T)$, it is true that $\{y\} - \Int\R^q_+ \subseteq \bigcup_{V_T \in \mathcal{A}(x_0-p^n)}\left(U(V_T+C_T) - \R^q_+\right)$ for all $n \geq 1$. Let $(y^k)_{k\geq1} \in \{y\} - \Int\R^q_+$ be a sequence with $\lim_{k \rightarrow \infty} y^k = y.$ Clearly, for each $k\geq 1$, \\$y^k \in \bigcap_{n\geq 1}\bigcup_{V_T \in \mathcal{A}(x_0-p^n)}\left(U(V_T+C_T) - \R^q_+\right)$, hence $$y \in \cl \bigcap_{n\geq 1}\bigcup_{V_T \in \mathcal{A}(x_0-p^n)}\left(U(V_T+C_T) - \R^q_+\right).$$  
\end{proof}

\begin{proof}[Proof of Proposition~\ref{prop:uniformcont}]
	{Assume for a proof by contradiction that $V(x_0,0)\subseteq \Int V(x_0-p,C_T)$. Hence, there exists $\epsilon > 0$ such that $V(x_0-p,C_T)\supseteq V(x_0,0)+B(0,\epsilon)$. It is enough to show that there is $\tilde{p}>p$ such that 
		\begin{equation}\label{eq:ptilde}
		V(x_0-\tilde{p},C_T)+B(0,\epsilon)\supseteq V(x_0 - p,C_T) 
		\end{equation}
		as this implies that $V(x_0-\tilde{p},C_T) \supseteq V(x_0,0)$, hence $\tilde{p} \in P^b(C_T)$, which contradicts that $p\in\bd P^b(C_T)$.
		To prove \eqref{eq:ptilde} note that as each $u\in\mathcal{U}$ is uniformly continuous, there exists $\delta> 0$ such that $\norm{x-y}\leq \delta$ implies that $\abs{u(x)-u(y)}\leq \frac{\epsilon}{\sqrt{q}}$. Let $\tilde{\delta} \in (0,\delta)$ and $\tilde{p}:= p+\tilde{\delta}\frac{e}{\norm{e}}$, where $e\in \R^d$ is the vector of ones. Note that \begin{align*}
		V(x_0-\tilde{p},C_T) +B(0,\epsilon) &= \bigcup_{V_T\in\mathcal{A}(x_0-\tilde{p})}U(V_T+C_T)+B(0,\epsilon)-\R^q_+ \\
		&= \bigcup_{V_T\in\mathcal{A}(x_0-{p})}U(V_T+C_T-\tilde{\delta} \frac{e}{\norm{e}}) + B(0,\epsilon)-\R^q_+,
		\end{align*} 
		as $\mathcal{A}(x_0-\tilde{p}) = \mathcal{A}(x_0-p)-\tilde{\delta}\frac{e}{\norm{e}}$. Let $U(V_T+C_T)-r \in V(x_0-p,C_T)$ for some $V_T \in \mathcal{A}(x_0-p)$ and $r\in \R^q_+$. Clearly, for each $u\in\mathcal{U}$ and $Q\in\mathcal{Q}$, we have $$\absgg {\E_{Q}[u(V_T+C_T)]-\E_{Q}[u(V_T+C_T-\tilde{\delta}\frac{e}{\norm{e}})]}\leq \frac{\epsilon}{\sqrt{q}}. $$ 
		Then 
		\begin{align*}
		&\norm{U(V_T+C_T)-U(V_T+C_T-\tilde{\delta}\frac{e}{\norm{e}})}\\
		&~~~~~~~ = \bigg(\sum_{j=1}^s\sum_{i=1}^r \absgg  {\E_{Q_j}(u^i(V_T+C_T)-u^i(V_T+C_T-\tilde{\delta}\frac{e}{\norm{e}}))}^2\bigg)^{\frac{1}{2}}\leq \epsilon.\end{align*}Hence, $U(V_T+C_T) -r \in U(V_T+C_T-\tilde{\delta}\frac{e}{\norm{e}})+B(0,\epsilon)-\R^q_+$ and \eqref{eq:ptilde} holds.}
\end{proof}

\section*{Acknowledgment}
We are grateful to Andreas Hamel for initiating many applications of set-valued analysis and in particular introducing set-valued extensions of concepts from economics and financial mathematics, which shaped and motivated the subject of this manuscript.  In particular, he initiated the discussion of a set-valued certainty equivalent which resulted in~\cite{juniorproj}. We would also like to thank \"Ozg\"ur Evren, Zachary Feinstein, Efe Ok and Frank Riedel for fruitful discussions and their constructive feedback. {Furthermore, we would like to thank two anonymous referees for detailed and helpful comments and pointing us to further related literature.}

\bibliographystyle{plain}
\bibliography{reportbib2}

\end{document}